\documentclass[11pt]{amsart}


\usepackage{amsfonts,latexsym}
\usepackage{amsmath}
\usepackage{amstext}
\usepackage{amssymb}\usepackage{mathrsfs}
\usepackage{color}
\usepackage{graphicx}
\usepackage{enumerate}
\graphicspath{{figures/}}

\newcommand{\R}{\mathbb{R}}

\newcommand{\N}{{\mathbb{N}}}

%
%
 \topmargin  = 0.0 in
 \leftmargin = 0.9 in
 \rightmargin = 1.0 in
 \evensidemargin = -0.10 in
 \oddsidemargin =  0.10 in
 \textheight = 8.5 in
 \textwidth  = 6.6 in
 \setlength{\parskip}{2mm}
 \setlength{\parindent}{0mm}


\newtheorem{theorem}{Theorem}
\newtheorem{lemma}{Lemma}
\newtheorem{proposition}{Proposition}
\newtheorem{corollary}{Corollary}
\newtheorem{definition}{Definition}

\newtheorem{assumption}{Assumption}
\newtheorem{remark}{Remark}
\newtheorem{problem}{Problem}

\numberwithin{equation}{section}


\begin{document}

\title[Opinion Dynamics with Decaying Confidence]{Opinion Dynamics with Decaying Confidence:\\ Application to Community Detection in Graphs}
\thanks{This work was supported by UJF-MSTIC (CARESSE Project).}


\author[Irinel-Constantin Mor\u{a}rescu]{Irinel-Constantin Mor\u{a}rescu}
\address{Laboratoire Jean Kuntzmann \\
Universit\'e de Grenoble \\
B.P. 53, 38041 Grenoble, France} \email{irinel-constantin.morarescu@inrialpes.fr}

\author[Antoine Girard]{Antoine Girard}
\address{Laboratoire Jean Kuntzmann \\
Universit\'e de Grenoble \\
B.P. 53, 38041 Grenoble, France} \email{antoine.girard@imag.fr}


\maketitle

\vspace{-0.5cm}
\begin{abstract}
We study a class of discrete-time multi-agent systems modelling
opinion dynamics with decaying confidence. We consider a network
of agents where each agent has an opinion. At each time step, the
agents exchange their opinion with their neighbors and update it
by taking into account only the opinions that
differ from their own less than some confidence bound. This
confidence bound is decaying: an agent gives repetitively
confidence only to its neighbors that approach sufficiently fast
its opinion. Essentially, the agents try to reach an agreement
with the constraint that it has to be approached no slower than a
prescribed convergence rate. Under that constraint, global
consensus may not be achieved and only local agreements may be
reached. The agents reaching a local agreement form communities
inside the network. In this paper, we analyze this opinion
dynamics model: we show that communities correspond to
asymptotically connected component of the network and give an
algebraic characterization of communities in terms of eigenvalues
of the matrix defining the collective dynamics. Finally, we apply
our opinion dynamics model to address the problem of community
detection in graphs. We propose a new formulation of the community
detection problem based on eigenvalues of normalized Laplacian
matrix of graphs and show that this problem can be solved using
our opinion dynamics model. 
We consider three examples of networks, and compare 
the communities we detect with those obtained by existing algorithms based on modularity optimization.
We show that our opinion dynamics model not only provides an appealing approach
to community detection but that it is also effective.
\end{abstract}

\section{Introduction}
The analysis of multi-agent systems received an increasing interest in the past decades.
In such systems, a set of agents interact according to simple local rules in order to achieve some global coordinated behavior.
The most widely studied problem is certainly the consensus or agreement problem where each agent in the network
maintains a value and repetitively averages its value with those of its neighbors, resulting in all the agents in the network
reaching asymptotically a common value. It is to be noted that the graph of interaction describing the network of agents is generally not fixed and may vary in time. Conditions ensuring consensus have been established by various authors including~\cite{jadbabaie2003,blondel2005,moreau2005,ren2005} (see~\cite{olfati2007} for a survey).
More recently, there have been several works providing estimations of the rate of convergence towards the consensus value~\cite{olshevsky2006,angeli2007,zhou2009}.

In this paper, we adopt a different point of view. We consider a
discrete-time multi-agent system where the agents try to reach an
agreement with the constraint that the consensus value must be
approached no slower than a prescribed convergence rate. Under
that constraint, global consensus may not be achieved and the
agents may only reach local agreement. We call communities the
subsets of agents reaching a consensus. Our model can be
interpreted in terms of opinion dynamics. Each agent has an
opinion. At each time step, the agent receives the opinions of its
neighbors and then updates its opinion by taking a weighted
average of its opinion and the opinions of its neighbors that are
within some confidence range of its own. The confidence
ranges are getting smaller at each time step: an agent gives
repetitively confidence only to the neighbors that approach
sufficiently fast its own opinion. This can be seen as a model for a
negotiation process where an agent expects that its neighbors move
significantly towards its opinion at each negotiation round in
order to keep negotiating. Our model can be seen as an extension
of the opinion dynamics with bounded confidence proposed by Krause
in~\cite{krause1997} and studied in~\cite{krause2002,blondel2007}.

We analyze our opinion dynamics model by first studying the
relation between asymptotic agreement of a subset of agents and
the fact that they are asymptotically connected. We show that
under suitable assumptions, these are actually equivalent (i.e.
communities correspond to asymptotically connected component of
the network) except for a set of initial opinions of measure $0$.
We then give an algebraic characterization of communities in terms
of eigenvalues of the matrix defining the collective dynamics.

Finally, we apply our opinion dynamics model to address the
problem of community detection in graphs. In the usual sense,
communities in a graph are groups of vertices such that the
concentration of edges inside communities is high with respect to
the concentration of edges between communities. Given the
increasing need of analysis tools for understanding complex
networks  in social sciences, biology, engineering or economics,
the community detection problem has attracted a lot of attention
in the recent years (see the extensive
survey~\cite{fortunato2009}). The problem of community detection
is however not rigorously defined mathematically.
Some formalizations of this problem have been proposed in
terms of optimization of quality functions such as modularity~\cite{newman2004}
or partition stability~\cite{lambiotte2009}.
We propose a new formulation of this
problem based on eigenvalues of normalized Laplacian matrix of
graphs  and show that this problem  can be solved using
our opinion dynamics model. 
We consider three examples of networks, and compare 
the communities that we detect with those obtained by the modularity optimization algorithms presented in~\cite{newman2006,blondel2008}.
We show that our opinion dynamics model not only provides an appealing approach
to community detection but that it is also effective.

\section{Opinion Dynamics with Decaying Confidence}

\subsection{Model Description}
We study a discrete-time multi-agent model. We consider a set of
$n$ {\it agents}, $V=\{1,\dots,n\}$. A relation $E\subseteq V
\times V$ models the {\it interactions} between the agents. We
assume that the relation is symmetric ($(i,j) \in E$ iff $(j,i)
\in E$) and anti-reflexive ($(i,i)\notin E$). $V$ is the set of
vertices and $E$ is the set of edges of an undirected graph
$G=(V,E)$, describing the network of agents. Each agent $i\in V$
has an {\it opinion} modelled by a real number $x_i(t)\in \R$.
Initially, agent $i$ has an opinion $x_i(0)=x_i^0$ independent
from the opinions of the other agents. Then, at every time step,
the agents update their opinion by taking a weighted average of
its opinion and opinions of other agents:
\begin{equation}
\label{eq:dynamics}
x_i(t+1) = \sum_{j=1}^{n} p_{ij}(t) x_j(t)
\end{equation}
with the coefficients $p_{ij}(t)$ satisfying
\begin{equation}
\label{eq:neigh}
\forall i, j\in V,\;
\left(p_{ij}(t) \ne 0 \iff j \in \{i\}\cup N_i(t) \right)
\end{equation}
where $N_i(t)$ denotes the {\it confidence neighborhood} of agent $i$ at time $t$:
\begin{equation}
\label{eq:neigh2}
N_i(t) = \left\{j\in V |\; \left((i,j) \in E\right) \land \left(|x_i(t)-x_j(t)| \le R \rho^t\right) \right\}
\end{equation}
with $R>0$ and $\rho\in (0,1)$ model parameters.

\begin{remark}\label{dependence}
It is noteworthy that the confidence neighborhoods $N_i(t)$ and the coefficients $p_{ij}(t)$ actually depend also
on the opinions $x_1(t),\dots,x_n(t)$.
For
the sake of simplicity and in order to reduce the length of the
equations we keep the notations $p_{ij}(t)$ and $N_i(t)$ pointing
out just the variation in time of these quantities.
\end{remark}

We make the following additional assumptions:
\begin{assumption}[Stochasticity]\label{assum1} For $t\in \N$, the coefficients $p_{ij}(t)$ satisfy
\begin{enumerate}[(a)]
\item $p_{ij}(t) \in [0,1]$, for all $i,j\in V$.

\item $\sum_{j=1}^{n} p_{ij}(t) =1$, for all $i\in V$.
\end{enumerate}
\end{assumption}

This model can be interpreted in terms of opinion dynamics. At
each time step $t$, agent $i\in V$ receives the opinions of its
neighbors in the graph $G$. If the opinion of $i$ differs from the
opinion of its neighbor $j$ more than a certain threshold $R
\rho^t$, then $i$ does not give confidence to $j$ and does not
take into account the opinion of $j$ when updating its own
opinion. The parameter $\rho$ characterizes the confidence decay
of the agents. Agent $i$ gives repetitively confidence only to
neighbors whose opinion converges sufficiently fast to its own
opinion. This model can be interpreted in terms of negotiations
where agent $i$ requires that, at each negotiation round, the
opinion of agent $j$ moves significantly towards its opinion
in order to keep negotiating with $j$. 

This model is somehow
related to the one discussed in~\cite{chatterjee1977,cohen1986} where agents harden their position by increasing over time the weight assigned to their own opinion.
In our model, the agents implicitely increase also
the weights assigned to their neighbors whose opinion converges
sufficiently fast to their own opinion, by disregarding the 
opinions of the other agents. 
As noticed in~\cite{cohen1986}, hardening the agents positions may hamper the agents to reach an asymptotic
consensus. This will be observed in our model as well.
However, the aim in this paper is not to exogenously
increase the self-confidence of the agents, but to meet a prescribed
convergence speed towards the final opinion profile. 

\begin{remark}
We assume in this paper that $\rho\in (0,1)$. However, let us remark that 
for $\rho=1$ (there is no confidence decay), with a
complete graph $G$ (every agent talks with all the other agents),
 and with coefficients $p_{ij}(t)$ given 
for all $j\in \{i\}\cup N_i(t)$ by
$$
p_{ij}(t)=\frac{1}{1+d_i(t)} \text{ with } d_i(t)=\sum_{j\in N_i(t)}1
$$ 
our model would coincide with
Krause model of opinion dynamics with bounded
confidence~\cite{krause1997,krause2002,blondel2007}.
\end{remark}

Our first result states that the opinion of each agent converges
to some limit value:
\begin{proposition}\label{prop1}
\label{pro:conv}Under Assumption~\ref{assum1} (Stochasticity),
for all $i\in V$, the sequence $(x_i(t))_{t\in \N}$ is convergent.
We denote $x_i^*$ its limit. Furthermore, we have for all $t\in \N$,
\begin{equation}
\label{eq:conv}
|x_i(t)-x_i^*| \le \frac{R}{1-\rho} \rho^t.
\end{equation}
\end{proposition}
\begin{proof} Let $i\in V$, $t\in \N$, we have from (\ref{eq:dynamics}), Assumption~\ref{assum1}  and (\ref{eq:neigh})
\begin{eqnarray*}
|x_i(t+1)-x_i(t)|&=&  \left|\left(\sum_{j=1}^{n} p_{ij}(t) x_j(t) \right) -x_i(t) \right| \\
&=&  \left|\sum_{j=1}^{n} p_{ij}(t)( x_j(t)-x_i(t)) \right|\\
&=&  \left|\sum_{j\in N_i(t)} p_{ij}(t)( x_j(t)-x_i(t)) \right|\\
&\le& \sum_{j\in N_i(t)} p_{ij}(t) |x_j(t)-x_i(t)|
\end{eqnarray*}
Then, it follows  from equation (\ref{eq:neigh2}) that
$$
|x_i(t+1)-x_i(t)| \le \sum_{j\in N_i(t)} p_{ij}(t) R \rho^t
$$
Finally, Assumption~\ref{assum1} gives for all $t\in \N$
$$
|x_i(t+1)-x_i(t)| \le (1-p_{ii}(t)) R \rho^t \le  R \rho^t.
$$
Let $t \in \N$, $\tau \in \N$, then
$$
|x_i(t+\tau)-x_i(t)| \le \sum_{k=0}^{\tau-1} |x_i(t+k+1)-x_i(t+k)| \le \sum_{k=0}^{\tau-1}   R \rho^{t+k}
$$
Therefore,
\begin{equation}
\label{eq:cauchy}
|x_i(t+\tau)-x_i(t)| \le \frac{R}{1-\rho}   \rho^t (1- \rho^\tau)\le \frac{R}{1-\rho} \rho^t
\end{equation}
which shows, since $\rho \in (0,1)$, that the sequence
$(x_i(t))_{t\in \N}$ is a Cauchy sequence in $\R$. Therefore, it
is convergent. Equation~(\ref{eq:conv}) is obtained from
(\ref{eq:cauchy}) by letting $\tau$ go to $+\infty$.
\end{proof}

\begin{remark}
The convergence of each opinion sequence $(x_i(t))_{t\in \N}$ could have been proved using a result from~\cite{lorenz2005}, even for $\rho=1$,
with the additional assumption that the non-zero coefficients $p_{ij}(t)$ are uniformly bounded below by some strictly positive real number. However, the result in~\cite{lorenz2005} does not provide an estimation of the convergence rate which is essential in our subsequent discussions.
\end{remark}

The previous proposition allows us to complete the interpretation
of our opinion dynamics model. The agents try to reach an
agreement with the constraint that the consensus value must be
approached no slower than $O(\rho^t)$. Under that constraint,
global agreement may not be attainable and the agents may only
reach local agreements. We refer to the sets of agents that
asymptotically agree as communities.

\begin{definition} Let $i,j \in V$, we say that agents $i$ and $j$ {\it asymptotically agree}, denoted $i\sim^* j$, if and only if $x_i^*=x_j^*$.
\end{definition}

It is straightforward to verify that $\sim^*$ is an equivalence relation over $V$.
\begin{definition} A {\it community} $C \subseteq V$ is an element of the quotient set $\mathscr{C}=V/\sim^*$.
\end{definition}

Let us remark that the community structure is dependent on the
initial distribution of opinions. In the following, we shall
provide some insight on the structure of these communities. But
first, we need to introduce some additional notations.

\subsection{Notations and Preliminaries}
We define the set of {\it interactions at time $t$}, $E(t) \subseteq V\times V$ as
$$
E(t) =\left \{ (i,j) \in E |\;  |x_i(t)-x_j(t)| \le R \rho^t \right\}.
$$
Let us remark that $(i,j) \in E(t)$ if and only if $j\in N_i(t)$. The {\it interaction graph at time $t$} is then $G(t)=(V,E(t))$. Let us remark that Remark~\ref{dependence} applies also to $E(t)$ and $G(t)$. 

For a set of  agents $I \subseteq V$, the subset of edges of $G$
connecting the agents in $I$ is $E_{I}=E\cap(I \times I)$. Let
$E'\subseteq E_I$ be a symmetric relation over $I$, then the graph
$G'=(I,E')$ is called a {\it subgraph} of $G$. If $I=V$, then the
graph $G'=(V,E')$ is called a {\it spanning subgraph} of $G$. The
set of spanning subgraphs of $G$ is denoted $\mathcal S(G)$. For
all $t\in \N$, $G(t)\in \mathcal S(G)$. Let us remark that the set
$\mathcal S(G)$ is finite: it has $2^{|E|/2}$ elements (because we only consider symmetric relations) where $|E|$
denotes the number of elements in $E$. Given a partition of the
agents $\mathcal P=\{I_1,\dots,I_p\}$, we define the set of edges
$E_\mathcal P=\bigcup_{I\in\mathcal P} E_I$ and the spanning
subgraph of $G$, $G_{\mathcal P}=(V,E_{\mathcal P})$. Essentially,
$G_{\mathcal P}$ is the spanning subgraph of $G$ obtained by
removing all the edges between agents belonging to different
elements of the partition $\mathcal P$. An interesting such graph
is the {\it graph of communities}
$G_{\mathscr{C}}=(V,E_{\mathscr{C}})$ where:
$$
E_{\mathscr{C}} =\left \{ (i,j) \in E |\; i\sim^* j \right\}.
$$

Let $G'=(V,E') \in \mathcal S(G)$, a {\it path} in $G'$ is a
finite sequence of edges $(i_1,i_2),(i_2,i_3),\dots,(i_p,i_{p+1})$
such that $(i_k,i_{k+1})\in E'$  for all $k\in \{1,\dots,p\}$. Two
vertices $i$, $j\in V$ are {\it connected} in $G'$ if there exists
a path in $G'$ joining $i$ and $j$ (i.e. $i_1=i$ and $j_p=j$).
A subset of agents $I\subseteq V$ is a {\it connected component}
of $G'$ if for all $i$, $j\in I$ with $i\ne j$, $i$ and $j$ are
connected in $G'$ and for all $i\in I$, for all $j\in V\setminus
I$, $i$ and $j$ are not connected in $G'$. The set of connected
components of $G'$ is denoted $\mathcal K(G')$. Let us remark that
$\mathcal K(G')$ is a partition of $V$.

We define the vectors of opinions $x(t) =
(x_1(t),\dots,x_n(t))^\top$ and of initial opinions
$x^0=(x_1^0,\dots,x_n^0)^\top$. The dynamics of the vector of
opinions is then given by
$$
x(t+1)=P(t) x(t)
$$
where $P(t)$ is the row stochastic matrix with entries $p_{ij}(t)$.
For a set of agents $I \subseteq V$, with $I=\{v_1,\dots,v_k\}$,
we define the vector of opinions $x_{I}(t)=(x_{v_1}(t),\dots,x_{v_k}(t))^\top$.
Given a $n\times n$ matrix $A$ with entries $a_{ij}$, we define the $k\times k$ matrix $A_{I}$ whose entries are the
$a_{v_{i}v_{j}}$.  In particular, $P_I(t)$ is the matrix with entries $p_{v_{i}v_{j}}(t)$.
Let us remark that $P_{I}(t)$ is generally not row stochastic. 
However, if $I \subseteq V$ is a subset of agents such that no agent in
$I$ is connected to an agent in $V\setminus I$ in the graph
$G(t)$, then it is easy to see that
$$
x_I(t+1)=P_I(t) x_I(t)
$$
and $P_I(t)$ is an aperiodic row stochastic matrix. Moreover, if $I$ is a connected component of $G(t)$ then $P_I(t)$ is irreducible.

The following sections are devoted to the analysis of the community structure of the network of agents.

\section{Asymptotic Connectivity and Agreement}

In this section, we explore the relation between communities and asymptotically connected components of the network.
Let us remark that the set of edges $E$ can be classified into two subsets as follows:
$$
E^f=\left\{(i,j)\in E|\; \exists t_{ij}\in \N,\; \forall s\ge t_{ij},\; (i,j)\notin E(s)\right\}
$$
and
$$
E^\infty = \left\{(i,j)\in E|\; \forall t\in \N,\; \exists s\ge t,\;  (i,j)\in E(s)\right\}.
$$
Intuitively, an edge $(i,j)$ is in $E^f$ if the agents $i$ and $j$
stop interacting with each other in finite time. $E^\infty$
consists of the interactions between agents that are infinitely
recurrent. It is clear that $E^f \cap E^\infty=\emptyset$ and
$E=E^f \cup E^\infty$. Also, since $E$ and thus $E^f$ is a finite
set, there exists $T\in \N$ such that
\begin{equation}
\label{eq:T}
\forall (i,j)\in E^f,\; \forall s\ge T,\; (i,j)\notin E(s).
\end{equation}
Let us remark that the sets $E^f$ and $E^\infty$ and the natural
number $T$ generally depend on the vector of initial opinions
$x^0$. We define the graph $G^\infty=(V,E^\infty)$.
\begin{definition} Let $i,j \in V$, we say that agents $i$ and $j$ are {\it asymptotically connected} if and only if $i$ and $j$ are connected in $G^\infty$.
We say that they are {\it asymptotically disconnected} if they are not asymptotically connected.
\end{definition}

\subsection{Asymptotic Connectivity Implies Asymptotic Agreement}

\begin{proposition}\label{pro:con} Under Assumption~\ref{assum1} (Stochasticity), if two agents $i,j \in V$ are asymptotically
connected then they asymptotically agree.
\end{proposition}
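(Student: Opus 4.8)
The plan is to reduce the statement to two steps: first, that agreement holds across any single edge of $G^\infty$, and second, that agreement propagates along paths by transitivity of $\sim^*$. Since asymptotic connectivity of $i$ and $j$ means, by definition, that there is a path $(i_1,i_2),(i_2,i_3),\dots,(i_p,i_{p+1})$ in $G^\infty$ with $i_1=i$ and $i_{p+1}=j$ and each edge $(i_k,i_{k+1})\in E^\infty$, it suffices to prove $x^*_{i_k}=x^*_{i_{k+1}}$ for each $k$ and then chain the equalities together. Because $\sim^*$ was already observed to be an equivalence relation (hence transitive), this chaining is immediate once the single-edge case is settled.

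First I would treat the single-edge case. Fix an edge $(k,\ell)\in E^\infty$. By the very definition of $E^\infty$, for every $t\in\N$ there is some $s\ge t$ with $(k,\ell)\in E(s)$; this lets me extract a strictly increasing sequence of times $s_1<s_2<\cdots$ with $(k,\ell)\in E(s_m)$ for all $m$. By the definition of $E(t)$, each such time satisfies $|x_k(s_m)-x_\ell(s_m)|\le R\rho^{s_m}$. I would then pass to the limit as $m\to\infty$: Proposition~\ref{pro:conv} guarantees $x_k(s_m)\to x_k^*$ and $x_\ell(s_m)\to x_\ell^*$, while $R\rho^{s_m}\to 0$ since $\rho\in(0,1)$ and $s_m\to\infty$. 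The continuity of the absolute value then yields $|x_k^*-x_\ell^*|\le 0$, forcing $x_k^*=x_\ell^*$, i.e. $k\sim^*\ell$.

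Applying this to every edge of the path gives $x^*_{i_1}=x^*_{i_2}=\cdots=x^*_{i_{p+1}}$, hence $x_i^*=x_j^*$, which is exactly asymptotic agreement. I do not expect a genuine obstacle here: the only point requiring care is justifying the existence of the infinite subsequence $(s_m)$ directly from the ``infinitely recurrent'' formulation of $E^\infty$, and then interchanging the limit with the inequality. Both are routine, and the estimate in Proposition~\ref{pro:conv} does the essential work by pinning down the limits $x_k^*,x_\ell^*$ that the recurrent interaction constraint is evaluated against. It is worth noting that this direction uses only Assumption~\ref{assum1} and convergence, and makes no use of a uniform lower bound on the weights $p_{ij}(t)$.
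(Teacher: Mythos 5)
Your proof is correct and follows essentially the same route as the paper's: extract an increasing sequence of times at which the edge is active, use the bound $R\rho^{t}\to 0$ together with the convergence guaranteed by Proposition~\ref{pro:conv} to get agreement across a single edge of $G^\infty$, then chain along a path by transitivity. The only cosmetic difference is that you make the single-edge/path decomposition explicit, whereas the paper compresses it into one sentence invoking transitivity of equality.
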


\begin{proof}
Suppose $(i,j)\in E^\infty$. From the definition of $ E^\infty$ there exists a strictly increasing sequence of non-negative integers $(\tau_k)_{k\in\N}$ such that for all $k\in \N$, $(i,j)\in E(\tau_k)$. Then, for all $k\in \N$,
$|x_i(\tau_k)-x_j(\tau_k)|\le R\rho^{\tau_k}$.
Since $\rho\in (0,1)$ and $\displaystyle\lim_{k\rightarrow\infty}\tau_k=+\infty$ and one gets $\displaystyle\lim_{k\rightarrow\infty}x_i(\tau_k)=\displaystyle\lim_{k\rightarrow\infty}x_j(\tau_k)$. On the other hand, the sequences $x_i(t)$ and $x_j(t)$ are convergent, which ensures that

\[
x_i^*=\lim_{t\to\infty}x_i(t)=\lim_{k\rightarrow\infty}x_i(\tau_k)=\lim_{k\rightarrow\infty}x_j(\tau_k)=\lim_{t\to\infty}x_j(t)=x_j^*
\]
The result in the proposition then follows from the transitivity of equality and the
definition of asymptotic connectivity.
\end{proof}

\begin{remark}The notion of asymptotic connectivity has
already been considered in several works (including~\cite{jadbabaie2003,blondel2005,moreau2005})
for proving consensus in multi-agent systems. Actually, the
previous proposition could be proved using Theorem~3
in~\cite{moreau2005}. However, for the sake of self-containment,
we preferred to provide a simpler proof of the result that uses
the specificities of our model.  
\end{remark}

\subsection{Asymptotic Agreement Implies Asymptotic Connectivity}
The converse result of Proposition~\ref{pro:con} is much more
challenging: it is clear that it cannot hold for all initial
conditions. Indeed, if all the initial opinions $x_i^0$ are
identical, then it is clear that the agents asymptotically agree
even though some of them may be asymptotically disconnected which would be the case if the graph $G$ is not connected.
Therefore, we shall prove that the converse result holds
for almost all initial conditions. 
In this paragraph, we will
need additional assumptions in order to be able to prove this
result. The first one is the following:
\begin{assumption}[Invertibility and graph to matrix mapping]
\label{assum2} The sequence of matrices $P(t)$ satisfy the following conditions:
\begin{enumerate}[(a)]
\item For all $t\in \N$, $P(t)$ is invertible.

\item For all $t\in \N$, $t' \in \N$, if $G(t')=G(t)$ then $P(t')=P(t)$.
\end{enumerate}
\end{assumption}
The first assumption is quite strong and we notice that it is not
verified by the original Krause model. However, it can be
enforced, for instance, by choosing $p_{ii}(t) > 1/2$ for all
$i\in V$, for all $t\in \N$, in that case $P(t)$ is a strictly
diagonally dominant matrix and therefore it is invertible. The
second assumption states that $P(t)$ only depends on the graph
$G(t)$, then we shall write $P(t)=P(G(t))$ where $P(G')$ is the
matrix associated to a graph $G'\in \mathcal S(G)$. From the first
assumption, $P(G')$ must be invertible. Then, we can define for
all $t \in \N$, the following set of matrices:
\begin{equation}
\label{eq:qt}
\mathcal Q_t = \left\{ P(G_0)^{-1} P(G_1)^{-1} \dots P(G_{t-1})^{-1}  |\; G_k \in \mathcal S(G),\; 0\le k\le t-1  \right\}.
\end{equation}
Let us remark that since $\mathcal S(G)$ is finite, the set $\mathcal Q_t$ is finite: it has at most $2^{t\times|E|/2}$ elements.

We shall now prove the converse result  of Proposition~\ref{pro:con} in two different cases.

\subsubsection{Average preserving dynamics} We first assume that the opinion dynamics preserves the average of the opinions:

\begin{assumption}[Average preserving dynamics]
\label{assum3} For all $t\in \N$, for all $j\in V$,
$\sum_{i=1}^{n} p_{ij}(t) =1$.
\end{assumption}

This assumption simply means that the matrix $P(t)$ is doubly
stochastic. It is therefore average preserving: the average of
$x(t)$ is equal to the average of $x(t+1)$. Also, if
$I \subseteq V$ is a subset of agents such that no
agent in $I$ is connected to an agent in $V\setminus I$ in the graph $G(t)$, it is easy to show that $P_I(G(t))$ is average preserving.

We now state the main result of the section:
\begin{theorem}\label{th:co} If the matrices $P(t)$ satisfy Assumptions~\ref{assum1} (Stochasticity),~\ref{assum2} (Invertibility and graph to matrix mapping) and~\ref{assum3} (Average preserving dynamics), for almost all vectors of initial opinions $x^0$,
two agents $i,j \in V$ asymptotically agree if and only if they are asymptotically connected.
\end{theorem}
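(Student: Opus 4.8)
The plan is to establish the nontrivial direction — asymptotic agreement implies asymptotic connectivity for almost all $x^0$ — by a measure-theoretic argument via the contrapositive, since Proposition~\ref{pro:con} already supplies the other implication for every $x^0$. First I would record the consequence of Proposition~\ref{pro:con}: all agents lying in a single connected component of $G^\infty$ are pairwise asymptotically connected and hence pairwise agree, so every community is a union of connected components of $G^\infty$. Therefore the converse can fail only when two \emph{distinct} connected components $I$ and $J$ of $G^\infty$ share a common limit, and the whole task reduces to showing that the set of initial opinions for which some coincidence of this type occurs is a Lebesgue-null set in $\R^n$.

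The central device is to turn the limit of a component into an explicit linear functional of a finite-time state. After the time $T$ of~(\ref{eq:T}) only edges of $E^\infty$ are active, so no agent of a component $I$ of $G^\infty$ interacts with an agent outside $I$; thus $x_I$ evolves autonomously for $t\ge T$ under the average-preserving submatrices guaranteed by Assumption~\ref{assum3}, while Proposition~\ref{pro:con} forces all coordinates of $x_I(t)$ to tend to a common value $\alpha_I$. Average preservation then pins this value down, independently of the (still $x^0$-dependent) future graph sequence, as
\[
\alpha_I=\frac{1}{|I|}\sum_{v\in I}x_v(T).
\]
Next I would invoke Assumption~\ref{assum2} to write $x(T)$ as an invertible linear image of $x^0$: since $P(t)=P(G(t))$ is invertible and determined by $G(t)$, one has $x(T)=Q^{-1}x^0$ with $Q\in\mathcal Q_T$ the product in~(\ref{eq:qt}) associated to the realized graphs $G(0),\dots,G(T-1)$. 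Letting $w$ be the vector equal to $1/|I|$ on $I$, to $-1/|J|$ on $J$, and to $0$ elsewhere, the coincidence $\alpha_I=\alpha_J$ becomes the single scalar equation $w^\top Q^{-1}x^0=0$; as $w\ne 0$ and $Q^{-1}$ is invertible, $w^\top Q^{-1}\ne 0$, so this equation cuts out a proper hyperplane of $\R^n$.

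To finish, I would stratify the space of initial conditions by its combinatorial data. The graph $G^\infty$, the threshold $T$, and the finite history $(G(0),\dots,G(T-1))$ range over countable sets, because $\mathcal S(G)$ is finite and $T\in\N$, so there are only countably many such data. For each datum, each unordered pair of distinct components $I,J$ of the associated $G^\infty$, and each choice of representatives $i\in I$, $j\in J$, the corresponding bad set is contained in the hyperplane $\{x^0:\ w^\top Q^{-1}x^0=0\}$, which has measure $0$. The full bad set — initial opinions for which some pair of asymptotically disconnected agents agrees — is thus a countable union of such null hyperplanes, hence null, and on its complement asymptotic agreement and asymptotic connectivity coincide.

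I expect the main obstacle to be conceptual rather than computational: the decomposition $E=E^f\cup E^\infty$, the graph $G^\infty$, and the time $T$ all depend on $x^0$, so one cannot simply fix the combinatorial structure and vary $x^0$ freely. The resolution is precisely the countable stratification above, and the leverage comes from the two standing hypotheses — Assumption~\ref{assum3} makes $\alpha_I$ a fixed, dynamics-independent linear functional of $x(T)$, while the invertibility in Assumption~\ref{assum2} forces the induced linear relation on $x^0$ to be nontrivial rather than vacuous, which is exactly what keeps the ``bad'' hyperplanes from degenerating to all of $\R^n$.
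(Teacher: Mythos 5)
Your proposal is correct and follows essentially the same route as the paper's proof: both use average preservation (Assumption~\ref{assum3}) to express each component's limit as the mean of $x_I(T)$, encode a coincidence of limits as membership of $x(T)$ in a hyperplane $\{x : c_{IJ}\cdot x = 0\}$, pull this back to $x^0$ through the invertible matrices of Assumption~\ref{assum2}, and conclude via a countable union of null hyperplanes indexed by the finite combinatorial data ($T$, graph history, pair of subsets). The only cosmetic difference is that the paper defines the exceptional set $X^0$ up front as a union over \emph{all} disjoint pairs $(I,J)$ and \emph{all} products in $\mathcal Q_t$, rather than stratifying by the realized data as you do, but the two formulations describe the same null set.
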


\begin{proof} The if part of the theorem is a consequence of Proposition~\ref{pro:con}. To prove the only if part, let us define the following set
$$
{\mathcal W} = \left\{ (I,J) |\; (I\subseteq V) \land (I\ne \emptyset) \land (J\subseteq V) \land (J\ne \emptyset) \land(I\cap J = \emptyset) \right\}.
$$
Since $V$ is a finite set, it is clear that ${\mathcal W}$ is finite (it has less than $2^{2n}$ elements). For all $(I,J)\in {\mathcal W}$, let $|I|$ and $|J|$ denote
the number of elements of $I$ and $J$ respectively.
We define the vector of $\R^n$, $c_{IJ}$ whose coordinates $c_{IJ,k}=1/|I|$ if $k\in I$, $c_{IJ,k}=-1/|J|$ if $k\in J$, and $c_{IJ,k}=0$ otherwise.
We define the $(n-1)$-dimensional subspace of $\R^n$:
$$
H_{IJ} = \left\{ x\in \R^n |\; c_{IJ}\cdot x = \sum_{i\in I}x_i/|I| - \sum_{j\in J}x_j/|J| =0 \right\}.
$$
Finally, let us define the subset of $\R^n$:
$$
X^0 = \bigcup_{t\in \N}\left( \bigcup_{(I,J)\in \mathcal W} \left( \bigcup_{Q \in \mathcal Q_t} Q H_{IJ} \right) \right)
$$
where $\mathcal Q_t$ is the set of matrices defined in (\ref{eq:qt}).
Since $\mathcal W$ is a finite set and for all $t\in \N$, $\mathcal Q_t$ are finite sets, $X^0$ is a countable union of $(n-1)$-dimensional subspaces of $\R^n$.
Therefore $X^0$ has Lebesgue measure $0$.

Let $x^0\in \R^n$ be a vector of initial opinions, let us assume
that there exist two agents $i,j \in V$ that asymptotically agree
but are asymptotically disconnected. Let us show that necessarily,
$x^0$ belongs to the set $X^0$. Let $I$ and $J$ denote the
connected components of $G^\infty$ containing $i$ and $j$
respectively. Since $i$ and $j$ are asymptotically disconnected,
$I \cap J= \emptyset$, therefore $(I,J) \in \mathcal W$.  Let $T$
be defined as in equation (\ref{eq:T}) (i.e. $E(t)\subseteq
E^{\infty},\ \forall t \geq T$), since no agent in $I$ is connected
to an agent outside of $I$ in $G^\infty$ (and hence in $G(t)$ for
$t\ge T$), we have that for all $t \ge T$,
$x_I(t+1)=P_I(G(t))x_I(t)$. Moreover, $P_I(G(t))$ is average preserving. 
Therefore, for all $t\ge
T$, the average of $x_I(t)$ is the same as the average of
$x_I(T)$. From Proposition~\ref{pro:con}, all agents in $I$
asymptotically agree, then the limit value is necessarily the
average of $x_I(T)$. Therefore $x_i^*=({\bf 1}_{|I|} \cdot
x_I(T))/|I|$ where ${\bf 1}_{|I|}$ denote the $|I|$-dimensional
vector with all entries equal to $1$. A similar discussion gives
that $x_j^*=  ({\bf 1}_{|J|}\cdot x_J(T))/|J|$. Since $i$ and $j$
asymptotically agree, we have $({\bf 1}_{|I|} \cdot x_I(T))/|I| =
({\bf 1}_{|J|}\cdot x_J(T))/|J|$. This means that $x(T) \in
H_{IJ}$ and therefore
$$
x^0 = P(G(0))^{-1} P(G(1))^{-1} \dots P(G(T-1))^{-1} x(T) \in \bigcup_{Q \in \mathcal Q_T} Q H_{IJ}
$$
which leads to $x^0\in X^0$.
\end{proof}

Hence, in the case of average preserving dynamics, asymptotic connectivity is equivalent to asymptotic agreement for almost all vectors of initial opinions.
We shall now prove a similar result under different assumptions.

\subsubsection{Fast convergence assumption} We now replace the average preserving assumption by another assumption.
From Proposition~\ref{pro:conv}, we know that the opinion of each agent converges to its limit value no slower than $O(\rho^t)$. This is an upper bound, numerical experiments show that in practice the convergence to the limit value is often slightly faster than $O(\rho^t)$. This observation motivates
the following assumption.
\begin{assumption}[Fast convergence]
\label{assum4} There exists $\underline{\rho} <\rho$ and $M \ge 0$ such that for all $i\in V$, for all $t \in \N$,
$$
|x_i(t)-x_i^*| \le M \underline{\rho}^t.
$$
\end{assumption}

\begin{remark}\label{rem:assum4} The previous assumption always holds unless there exists $i\in V$ such that
$$
\limsup_{t\rightarrow +\infty} \frac{1}{t} \log(|x_i(t)-x_i^*|) = \log(\rho).
$$
It should be noted that unlike Assumptions~\ref{assum1}
(Stochasticity),~\ref{assum2} (Invertibility and graph to matrix mapping) and~\ref{assum3}
(Average preserving dynamics), it is generally not possible to check a
priori whether Assumption~\ref{assum4} holds.
However, numerical experiments tend to show that in practice,
it does.
\end{remark}

The previous assumption allows us to state the following result:
\begin{lemma}\label{lemma} Under Assumptions~\ref{assum1} (Stochasticity) and~\ref{assum4} (Fast convergence), there exists $T' \in \N$ such that for all $t\ge T'$, $G(t)=G^\infty$.
Moreover, $G^\infty=G_{\mathscr{C}}$.
\end{lemma}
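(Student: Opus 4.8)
The plan is to reduce both assertions to a single elementary estimate comparing two exponentials. The central observation is this: if $(i,j)\in E$ is any edge whose endpoints asymptotically agree, i.e. $x_i^*=x_j^*$, then the triangle inequality together with Assumption~\ref{assum4} (Fast convergence) gives
$$
|x_i(t)-x_j(t)| \le |x_i(t)-x_i^*| + |x_j^*-x_j(t)| \le 2M\underline{\rho}^t
$$
for all $t\in\N$. I would use this bound to control the interaction set $E(t)$ for large $t$.

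Next I would exploit $\underline{\rho}<\rho$. Since the ratio $(\underline{\rho}/\rho)^t\to 0$, for each such edge there is a threshold $t_{ij}$ beyond which $2M\underline{\rho}^t \le R\rho^t$, and then by (\ref{eq:neigh2}) we have $(i,j)\in E(t)$ for all $t\ge t_{ij}$. In particular the edge is present infinitely often, so it belongs to $E^\infty$; this shows $E_{\mathscr{C}}\subseteq E^\infty$. For the reverse inclusion I would invoke Proposition~\ref{pro:con}: if $(i,j)\in E^\infty$ then $i$ and $j$ are joined by a single-edge path in $G^\infty$, hence asymptotically connected, hence asymptotically agree, so $(i,j)\in E_{\mathscr{C}}$. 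The two inclusions give $E^\infty=E_{\mathscr{C}}$, i.e. $G^\infty=G_{\mathscr{C}}$.

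Finally I would pin down the time $T'$. Because $E$ is finite, so is $E_{\mathscr{C}}$, and I can set $T'$ to be the maximum of the number $T$ from (\ref{eq:T}) and of all the finitely many thresholds $t_{ij}$ over edges $(i,j)\in E_{\mathscr{C}}$. For $t\ge T'$ every edge of $E^\infty=E_{\mathscr{C}}$ lies in $E(t)$ by the estimate above, while every edge of $E^f$ lies outside $E(t)$ by the defining property of $T$ in (\ref{eq:T}); since $E=E^f\cup E^\infty$ and $E(t)\subseteq E$, this forces $E(t)=E^\infty$, i.e. $G(t)=G^\infty$.

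The step I expect to be the real content is the passage $E_{\mathscr{C}}\subseteq E^\infty$, and in particular the point where Assumption~\ref{assum4} is indispensable: it supplies a \emph{uniform} exponential bound with base $\underline{\rho}$ strictly below $\rho$, so that a single comparison $2M\underline{\rho}^t\le R\rho^t$ works for every agent and, $E$ being finite, the thresholds $t_{ij}$ admit a finite maximum. Without this uniform gap in the rate, an equal-limit edge whose endpoints converge at rate exactly $\rho$ could fail to be captured by $E(t)$, and the clean eventual identity $G(t)=G^\infty$ could break down.
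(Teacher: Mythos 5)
Your proof is correct and follows essentially the same route as the paper's: the triangle inequality plus Assumption~\ref{assum4} to get $E_{\mathscr{C}}\subseteq E(t)$ for large $t$, Proposition~\ref{pro:con} for $E^\infty\subseteq E_{\mathscr{C}}$, and the time $T$ from (\ref{eq:T}) for $E(t)\subseteq E^\infty$, combined by taking a maximum of thresholds. The only cosmetic difference is that the paper uses a single uniform threshold $T_2$ (possible since $M$ and $\underline{\rho}$ in Assumption~\ref{assum4} do not depend on the edge) rather than your finitely many per-edge thresholds $t_{ij}$.
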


\begin{proof} We shall prove the lemma by showing that there exists $T'\in \N$ such that for all $t\ge T' $, $E(t)\subseteq E^\infty \subseteq E_{\mathscr{C}} \subseteq E(t)$.
Firstly, let $T_1 \ge T$ where $T$ is defined as in equation
(\ref{eq:T}), then for all $t\ge T_1$, $E(t)\subseteq E^\infty$.
Secondly, let $(i,j)\in E^\infty$, then agents $i$ and $j$ are
asymptotically connected. From Proposition~\ref{pro:con}, it
follows that $i$ and $j$ asymptotically agree. Therefore,
$(i,j)\in E_{\mathscr{C}}$. Thirdly, let $(i,j) \in
E_{\mathscr{C}}$, then $x_i^*=x_j^*$ and for all $t\in \N$
\begin{eqnarray*}
|x_i(t) -x_j(t) | & \le & |x_i(t) -x_i^*| + |x_i^* -x_j^*| + |x_j(t) -x_j^*| \\
                  & \le & |x_i(t) -x_i^*| + |x_j(t) -x_j^*|
\end{eqnarray*}
From Assumption~\ref{assum4}, we have for all $t\in \N$,
$$
|x_i(t) -x_j(t) | \le 2 M \underline{\rho}^t.
$$
Since $\underline{\rho}<\rho$, there exists $T_2 \in \N$, such
that for all $t\ge T_2$, $2 M \underline{\rho}^t \le R \rho^t$.
Then, for all $t\ge T_2$, $(i,j)\in E(t)$. Let $T'=\max(T_1,T_2)$,
then for all $t\ge T'$, $E(t)=E^\infty=E_{\mathscr{C}}$ and thus
$G(t)=G^\infty=G_{\mathscr{C}}$.
\end{proof}

The previous result states that after a finite number of steps, the graph of interactions between agents remains always the same. Then, we can state a result similar to Theorem~\ref{th:co}:
\begin{theorem}\label{th:co1} Under Assumptions~\ref{assum1} (Stochasticity),~\ref{assum2} (Invertibility and graph to matrix mapping) and~\ref{assum4} (Fast convergence), for almost all vectors of initial opinions $x^0$,
two agents $i,j \in V$ asymptotically agree if and only if they are asymptotically connected.
\end{theorem}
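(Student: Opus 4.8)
The \emph{if} part is again a direct consequence of Proposition~\ref{pro:con}, so the plan is to establish the \emph{only if} part by adapting the measure-zero construction from the proof of Theorem~\ref{th:co}. The essential new difficulty is that, without average preservation, we no longer know that the common limit of a connected component is its arithmetic mean; the idea is to replace that mean by the weighted average made available by Lemma~\ref{lemma}.

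First I would invoke Lemma~\ref{lemma}: under Assumptions~\ref{assum1} and~\ref{assum4} there is a $T'\in\N$ with $G(t)=G^\infty$ for all $t\ge T'$, so that $x(t+1)=P(G^\infty)x(t)$ for $t\ge T'$. Fixing a connected component $I$ of $G^\infty$, we then have $x_I(t+1)=P_I(G^\infty)x_I(t)$ for $t\ge T'$, where $P_I(G^\infty)$ is row stochastic, irreducible and aperiodic. By the Perron--Frobenius theorem it admits a unique stationary distribution $\pi_I$ (the positive left eigenvector for the eigenvalue $1$, normalized to sum to $1$) and $(P_I(G^\infty))^m\to \mathbf{1}_{|I|}\pi_I^\top$ as $m\to\infty$. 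Passing to the limit in $x_I(T'+m)=(P_I(G^\infty))^m x_I(T')$ would then show that every agent in $I$ has limit value $x_i^*=\pi_I^\top x_I(T')$, a weighted average of the opinions at time $T'$.

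The obstacle to a naive measure-zero argument is that these weights $\pi_I$ depend on $G^\infty$, which itself depends on $x^0$. The key point is that $\mathcal S(G)$ is finite, so only finitely many weight vectors can arise. I would therefore index the exceptional hyperplanes by triples $(G',I,J)$ with $G'\in\mathcal S(G)$ and $I,J$ distinct connected components of $G'$: for each such triple set $c_{G',I,J}\in\R^n$ to have entries $\pi_{I,k}$ for $k\in I$, $-\pi_{J,k}$ for $k\in J$, and $0$ otherwise, where $\pi_I,\pi_J$ are the stationary distributions of $P_I(G'),P_J(G')$, and let $H_{G',I,J}=\{x\in\R^n:\ c_{G',I,J}\cdot x=0\}$. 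Each $c_{G',I,J}$ is nonzero, so each $H_{G',I,J}$ is an $(n-1)$-dimensional subspace, and
$$
X^0=\bigcup_{t\in\N}\ \bigcup_{(G',I,J)}\ \bigcup_{Q\in\mathcal Q_t} Q\,H_{G',I,J}
$$
is a countable union of such subspaces, hence of Lebesgue measure $0$.

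Finally I would close exactly as in Theorem~\ref{th:co}. Suppose $i,j$ asymptotically agree but are asymptotically disconnected, and let $I,J$ be their connected components in $G^\infty$, so $I\cap J=\emptyset$. The limit computation gives $x_i^*=\pi_I^\top x_I(T')$ and $x_j^*=\pi_J^\top x_J(T')$, so $x_i^*=x_j^*$ forces $c_{G^\infty,I,J}\cdot x(T')=0$, i.e. $x(T')\in H_{G^\infty,I,J}$. By the invertibility in Assumption~\ref{assum2}, $x^0=P(G(0))^{-1}\cdots P(G(T'-1))^{-1}x(T')=Q\,x(T')$ for some $Q\in\mathcal Q_{T'}$, whence $x^0\in Q\,H_{G^\infty,I,J}\subseteq X^0$. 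Hence agreement together with disconnection can occur only for $x^0\in X^0$, and the claim holds for almost all $x^0$. The genuinely new step is not the Perron--Frobenius limit, which is routine, but the observation that although the weights depend on the initial opinions through $G^\infty$ they range over a finite set, which is precisely what keeps $X^0$ negligible.
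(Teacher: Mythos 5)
Your proposal is correct and matches the paper's own proof essentially step for step: the paper likewise invokes Lemma~\ref{lemma}, identifies each limit value as the inner product of $x_I(T')$ with the normalized left Perron eigenvector of $P_I(G^\infty)$, and builds the negligible set $X^0$ as a countable union of hyperplanes indexed over $t\in\N$, spanning subgraphs $G'\in\mathcal S(G)$, pairs of distinct connected components of $G'$, and matrices $Q\in\mathcal Q_t$. Your stationary distribution $\pi_I$ is exactly the paper's eigenvector $e_I(G')$, so there is nothing genuinely different to compare.
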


\begin{proof} The if part of the theorem is a consequence of Proposition~\ref{pro:con}.
To prove the only if part, let us define the following set associated to a spanning subgraph $G'\in \mathcal S(G)$:
$$
{\mathcal W}(G') = \left\{ (I,J)|\; (I\subseteq V) \land (J\subseteq V) \land (I \ne J) \land (I\in \mathcal K(G')) \land (J\in \mathcal K(G')) \right\}.
$$
Since $V$ is a finite set, it is clear that ${\mathcal W}(G')$ is
finite (it has less than $2^{2n}$ elements). Let $(I,J)\in
{\mathcal W}(G')$, $I=\left\{v_1,\dots,v_{|I|} \right\}$,
$J=\left\{w_1,\dots,w_{|J|} \right\}$. Since $I$ and $J$ are
connected components of $G'$, we have that $P_I(G')$ and $P_J(G')$
are aperiodic irreducible row stochastic matrices.
Let $e_I(G')$ and $e_J(G')$ be the left Perron eigenvectors of $P_I(G')$ and $P_J(G')$, respectively:
$$
e_I(G')^\top P_I(G')= e_I(G')^\top \text{ and } e_I(G')\cdot {\bf 1}_{|I|} =1
$$
and
$$
e_J(G')^\top P_J(G')= e_J(G')^\top \text{ and } e_J(G')\cdot {\bf 1}_{|J|} =1.
$$
We define the vector of $\R^n$, $c_{IJ}$ whose coordinates are given by $c_{IJ,v_k}=e_{I,k}$ if $v_k\in I$, $c_{IJ,w_k}=-e_{J,k}$ if $w_k\in J$ and $c_{IJ,k}=0$ if $k\in V\setminus(I\cup J)$.
We define the $(n-1)$-dimensional subspace of $\R^n$:
$$
H_{IJ}(G') = \left\{ x\in \R^n |\; c_{IJ}(G')\cdot x =0 \right\}.
$$
Finally, let us define the subset of $\R^n$:
\begin{equation}
\label{eq:X0}
X^0 = \bigcup_{t\in \N}\left(
\bigcup_{G'\in \mathcal S(G)} \left(
 \bigcup_{(I,J)\in \mathcal W(G')} \left( \bigcup_{Q \in \mathcal Q_t} Q H_{IJ}(G') \right)\right) \right)
\end{equation}
where $\mathcal Q_t$ is the set of matrices defined in
(\ref{eq:qt}). $\mathcal S(G)$ is a finite set and for all $G'\in
\mathcal S(G)$, $\mathcal W(G')$ is a finite set. Moreover for all
$t\in \N$, $\mathcal Q_t$ is a finite set. Then, $X^0$ is a
countable union of $(n-1)$-dimensional subspaces of $\R^n$.
Therefore $X^0$ has Lebesgue measure $0$.

Let $x^0\in \R^n$ be a vector of initial opinions, let us assume
that there exist two agents $i,j \in V$ that asymptotically agree
but are asymptotically disconnected. Let us show that necessarily,
$x^0$ belongs to the set $X^0$. Let $I$ and $J$ denote the
connected components of $G^\infty$ containing $i$ and $j$
respectively. Since $i$ and $j$ are asymptotically disconnected,
$I \ne J$, therefore $(I,J) \in \mathcal W(G^\infty)$. Since $I$
is a connected component of $G^\infty$, it follows from
Lemma~\ref{lemma} that for all $t\ge T'$,
$x_I(t+1)=P_I(G^\infty)x_I(t)$. Moreover, $P_I(G^\infty)$ is an
aperiodic irreducible row stochastic matrix and from the
Perron-Frobenius Theorem (see e.g.~\cite{seneta1981}), it follows
that $1$ is a simple eigenvalue of $P_I(G^\infty)$ and all other
eigenvalues of $P_I(G^\infty)$ have modulus strictly smaller than
$1$. Therefore,
$$
\lim_{t\rightarrow +\infty} x_I(t) = (e_I(G^\infty)\cdot x_I(T')) {\bf 1}_{|I|}
$$
and $x_i^*=e_I(G^\infty)\cdot x_I(T')$.
A similar discussion gives that $x_j^*=e_J(G^\infty)\cdot x_J(T')$.
Since $i$ and $j$ asymptotically agree, we have $e_I(G^\infty)\cdot x_I(T')=e_J(G^\infty)\cdot x_J(T')$.
This means that $x(T')\in H_{I,J}(G^\infty)$ and therefore
$$
x^0 = P(G(0))^{-1} P(G(1))^{-1} \dots P(G(T'-1))^{-1} x(T') \in \bigcup_{Q \in \mathcal Q_{T'}} Q H_{IJ}(G^\infty)
$$
which leads to $x^0\in X^0$.
\end{proof}

In this section, we showed that asymptotic connectivity of agents implies asymptotic agreement and that under additional reasonable assumptions these are actually equivalent except
for a set of vectors of initial opinions of Lebesgue measure $0$.
In other words, we can consider almost surely that the communities of agents correspond to the connected components of the graph $G^\infty$. Actually, we are confident that a similar result holds even without Assumptions~\ref{assum3} or~\ref{assum4}.
However, in this case, the set $X^0$ of initial opinions leading to agreement without connectivity is not necessarily a countable union of $(n-1)$-dimensional subspaces, and it can have much more complex geometrical features. Therefore, we leave the generalization of the results presented in this section as future work.

In the following, under Assumptions~\ref{assum1}
(Stochasticity),~\ref{assum2} (Invertibility and graph to matrix mapping) and~\ref{assum4}
(Fast convergence), we show that an algebraic characterization of
communities can be given in terms of eigenvalues of the matrix
associated to the graph of communities $P(G_{\mathscr{C}})$.

\section{Algebraic Characterization of Communities}

Let $G'\in \mathcal S(G)$, let $I\subseteq V$ be a subset of
agents such that no agent in $I$ is connected to an agent in
$V\setminus I$ in the graph $G'$, then $P_I(G')$ is a row
stochastic matrix. Let $\lambda_1(P_I(G')),\dots,
\lambda_{|I|}(P_I(G'))$ denote the eigenvalues of $P_I(G')$ with
$\lambda_1(P_I(G'))=1$ and
$$
|\lambda_1(P_I(G'))|\ge |\lambda_2(P_I(G'))|\ge \dots \ge |\lambda_{|C|}(P_I(G'))|.
$$

Let $C\in \mathscr{C}$, then no agent in $C$ is connected to an
agent in $V\setminus C$ in the graph $G_{\mathscr{C}}$. The
following theorem gives a characterization of the communities in
terms of the eigenvalues $\lambda_2(P_C(G_{\mathscr{C}}))$ for
$C\in \mathscr{C}$.
\begin{theorem}
\label{th:alg} Under Assumptions~\ref{assum1}
(Stochasticity),~\ref{assum2} (Invertibility and graph to matrix mapping) and~\ref{assum4}
(Fast convergence), for almost all vectors of initial opinions
$x^0$, for all communities $C\in {\mathscr{C}}$, such that $|C|\ge
2$,
$$
|\lambda_2(P_C(G_{\mathscr{C}})) |< \rho.
$$
\end{theorem}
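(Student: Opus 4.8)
The plan is to follow the measure-theoretic template already used for the proof of Theorem~\ref{th:co1}: exhibit a countable union $X^0$ of proper (hence Lebesgue-null) subspaces of $\R^n$ and show that every initial condition admitting a community with $|\lambda_2|\ge\rho$ must lie in $X^0$. First I would pass to the fixed-graph regime. Under Assumptions~\ref{assum1} and~\ref{assum4}, Lemma~\ref{lemma} supplies a time $T'$ with $G(t)=G^\infty=G_{\mathscr{C}}$ for all $t\ge T'$. Fix a community $C\in\mathscr{C}$ with $|C|\ge 2$; since $C$ is a connected component of $G^\infty$, the restricted dynamics is autonomous for $t\ge T'$, namely $x_C(t)=P_C(G^\infty)^{t-T'}x_C(T')$, and $P:=P_C(G^\infty)$ is aperiodic, irreducible and row stochastic. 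Writing $e:=e_C(G^\infty)$ for its left Perron eigenvector and $\Pi:={\bf 1}_{|C|}e^\top$ for the Perron projection, the relations $P{\bf 1}_{|C|}={\bf 1}_{|C|}$ and $e^\top{\bf 1}_{|C|}=1$ give $\Pi N=N\Pi=0$ for $N:=P-\Pi$, whence $P^{k}=\Pi+N^{k}$. Exactly as in the proof of Theorem~\ref{th:co1}, the common limit is $x_i^*=e\cdot x_C(T')$ for $i\in C$, so the deviation from consensus is
$$
x_C(t)-x_i^*{\bf 1}_{|C|}=N^{t-T'}x_C(T').
$$

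Next I would bring in Assumption~\ref{assum4}, which forces $\|N^{t-T'}x_C(T')\|_\infty\le M\underline{\rho}^{\,t}$, i.e. $\|N^{k}x_C(T')\|=o(\rho^{k})$ because $\underline{\rho}<\rho$. The crucial spectral fact is that the set
$$
L:=\{v:\ \|N^{k}v\|=o(\rho^{k})\}
$$
is precisely the sum of the real generalized eigenspaces of $N$ associated to eigenvalues of modulus strictly smaller than $\rho$. The spectrum of $N$ is $\{0,\lambda_2(P),\dots,\lambda_{|C|}(P)\}$, so if $|\lambda_2(P)|\ge\rho$ the complementary space $V_{\ge\rho}$ is nonzero and $L$ is a \emph{proper} subspace of $\R^{|C|}$; thus $x_C(T')\in L$ is a nontrivial linear constraint. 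The inclusion $L\subseteq V_{<\rho}$ is immediate from the Jordan form. The reverse inclusion — that a nonzero $V_{\ge\rho}$-component prevents $o(\rho^{k})$ decay — is the main obstacle, since one must exclude persistent cancellation among eigenvalues of modulus exactly $\rho$. I would handle it by noting that $N$ restricted to $V_{\ge\rho}$ is invertible with $\rho\big((N|_{V_{\ge\rho}})^{-1}\big)\le\rho^{-1}$, which yields $\limsup_k\|N^kv\|/\rho^k>0$ for every nonzero $v\in V_{\ge\rho}$; in the borderline semisimple case the positivity of this $\limsup$ follows from the standard equidistribution/positivity argument applied to $\mathrm{Re}(e^{ik\theta}w)$.

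Finally I would pull the constraint back to the initial condition. Lifting $L$ to $\tilde L:=\{y\in\R^n:\ y_C\in L\}$ produces a proper subspace of $\R^n$, and since $x^0=P(G(0))^{-1}\cdots P(G(T'-1))^{-1}x(T')=Qx(T')$ for some $Q\in\mathcal Q_{T'}$, we obtain $x^0\in Q\tilde L$. Taking the union over all $t\in\N$, all $G'\in\mathcal S(G)$, all connected components $C\in\mathcal K(G')$ with $|C|\ge 2$ and $|\lambda_2(P_C(G'))|\ge\rho$, and all $Q\in\mathcal Q_t$, gives a set
$$
X^0=\bigcup_{t\in\N}\ \bigcup_{G'\in\mathcal S(G)}\ \bigcup_{\substack{C\in\mathcal K(G')\\ |C|\ge2,\ |\lambda_2(P_C(G'))|\ge\rho}}\ \bigcup_{Q\in\mathcal Q_t} Q\tilde L(G',C)
$$
that is a countable union of $(n-1)$-or-lower-dimensional subspaces, hence of Lebesgue measure $0$. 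By construction, any $x^0$ for which some community of size at least $2$ violates $|\lambda_2(P_C(G_{\mathscr{C}}))|<\rho$ lies in $X^0$, which proves the theorem.
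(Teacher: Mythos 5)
There is a genuine gap at the very first step of your argument: you assert that a community $C\in\mathscr{C}$ ``is a connected component of $G^\infty$.'' That statement is not true for all initial conditions --- it is precisely the content of Theorem~\ref{th:co1}, and it holds only for $x^0$ outside the measure-zero set $X^0$ defined in (\ref{eq:X0}). In general, Proposition~\ref{pro:con} together with Lemma~\ref{lemma} only guarantees that a community is a \emph{union} of connected components of $G^\infty=G_{\mathscr{C}}$. When $C$ is a disjoint union of two or more such components, $P_C(G_{\mathscr{C}})$ is block diagonal with several stochastic blocks, hence reducible; $1$ is then a multiple eigenvalue, so $|\lambda_2(P_C(G_{\mathscr{C}}))|=1\ge\rho$ and the conclusion of the theorem genuinely fails at such $x^0$. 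Your Perron-projection construction ($\Pi={\bf 1}e^\top$, $N=P-\Pi$) does not even apply there, since $P$ is not irreducible. Worse, your exceptional set does not contain these initial conditions, because it is indexed only by connected components $C'$ of spanning subgraphs with $|\lambda_2(P_{C'}(G'))|\ge\rho$. Concretely: let $G$ be two disjoint triangles with the model (\ref{eq:model2}) and $\delta$ small; every connected component of every spanning subgraph then satisfies $|\lambda_2|<\rho$, so your exceptional set is \emph{empty}, yet any $x^0$ assigning the same constant opinion to both triangles (Assumption~\ref{assum4} holds trivially there) produces the single community $C=V$ with $\lambda_2(P_C(G_{\mathscr{C}}))=1$. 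So your closing claim --- that by construction any violating $x^0$ lies in your exceptional set --- is false. The fix is what the paper does: restrict to $x^0\notin X^0$ so that communities \emph{are} connected components, and take the final exceptional set to be the union of $X^0$ with the spectral set you build.

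The spectral core of your argument, on the other hand, is sound and is a legitimate variant of the paper's. The paper pairs the deviation $x_C(t)-x_C^*$ with a single left eigenvector $f_C$ of $\lambda_2$ and uses Cauchy--Schwarz to get a lower bound proportional to $|\lambda_2|^{t}$, concluding $f_C\cdot x_C(T')=0$, i.e.\ one explicit hyperplane per component; you instead force $x_C(T')$ into the full stable generalized eigenspace $V_{<\rho}$ of $N$, which is a proper subspace when $|\lambda_2|\ge\rho$. Both routes yield countable unions of proper subspaces, hence measure zero. One technical remark: your claimed characterization of $L=\{v:\|N^kv\|=o(\rho^k)\}$ as exactly $V_{<\rho}$ needs the delicate borderline analysis at modulus exactly $\rho$, and your Gelfand argument on $(N|_{V_{\ge\rho}})^{-1}$ only yields $\liminf_k\|N^kv\|^{1/k}\ge\rho$, not a positive $\limsup$ of $\|N^kv\|/\rho^k$. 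But you do not need any of this: Assumption~\ref{assum4} gives decay $O(\underline{\rho}^{\,k})$ with $\underline{\rho}$ \emph{strictly} less than $\rho$, and $O(\underline{\rho}^{\,k})$ decay already forces the $V_{\ge\rho}$-component to vanish directly from the Gelfand bound, so the borderline cancellation issue never arises.
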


\begin{proof} Let us consider a spanning subgraph
$G'\in \mathcal S(G)$, let $I=\left\{v_1,\dots,v_{|I|} \right\}$,
with $|I|\ge 2$, be a connected component of $G'$ then $P_I(G')$
is an aperiodic irreducible row stochastic matrix. Then, from the
Perron-Frobenius Theorem, it follows that $1$ is a simple
eigenvalue of $P_I(G')$. Therefore, $\lambda_2(P_I(G'))\ne 1$. Let
$f_I(G')$ be a left eigenvector of $P_I(G')$ associated to
eigenvalue $\lambda_2(P_I(G'))$.
Let us define the vector of $\R^n$, $c_I(G')$ whose coordinates
are given by $c_{I,v_k}(G')=f_{I,k}(G')$ if $v_k\in I$ and
$c_{I,k}(G')=0$ if $k\in V\setminus I$. We define the
$(n-1)$-dimensional subspace of $\R^n$:
$$
H_I(G')=\left\{ x\in \R^n |\; c_{I}(G')\cdot x =0 \right\}.
$$
Finally, let us define the subset of $\R^n$:
$$
Y^0 = \bigcup_{t\in \N}\left(
\bigcup_{G'\in \mathcal S(G)} \left(
 \bigcup_{I\in \mathcal K(G'),\; |I|\ge 2} \left( \bigcup_{Q \in \mathcal Q_t} Q H_{I}(G') \right)\right) \right)
$$
where $\mathcal Q_t$ is the set of matrices defined in
(\ref{eq:qt}). $\mathcal S(G)$ is a finite set and for all $G'\in
\mathcal S(G)$, $\mathcal K(G')$ is a finite set. Moreover, for
all $t\in \N$, $\mathcal Q_t$ is a finite set. Then, $Y^0$ is a
countable union of $(n-1)$-dimensional subspaces of $\R^n$.
Therefore $Y^0$ has Lebesgue measure $0$.

Let $X^0$ be given as in equation (\ref{eq:X0}), let $x^0\in \R^n
\setminus X^0$ be a vector of initial opinions. Let us assume
there is a community $C\in {\mathscr{C}}$ with $|C|\ge 2$, such
that $|\lambda_2(P_C(G_{\mathscr{C}})) |\ge \rho$. Let us show
that necessarily, $x^0$ belongs to the set $Y^0$. First, since
$x^0\notin X^0$, we have from the proof of Theorem~\ref{th:co1}
that $C$ is a connected component of $G^\infty=G_{\mathscr{C}}$.
Therefore, from Lemma~\ref{lemma}, there exists $T'\in \N$, such that for all $t\ge T'$,
$x_C(t+1)=P_C(G_{\mathscr{C}})x_C(t)$ and $P_C(G_{\mathscr{C}})$
is an aperiodic irreducible row stochastic matrix. From the
Perron-Frobenius Theorem, it follows that $1$ is a simple
eigenvalue of $P_C(G_{\mathscr{C}})$ and all other eigenvalues of
$P_C(G_{\mathscr{C}})$ have modulus strictly smaller than $1$. Let
$e_C(G_{\mathscr{C}})$ be the left Perron eigenvector of
$P_C(G_{\mathscr{C}})$:
$$
e_C(G_{\mathscr{C}})^\top P_C(G_{\mathscr{C}})=
e_C(G_{\mathscr{C}})^\top \text{ and } e_C(G_{\mathscr{C}})\cdot
{\bf 1}_{|C|} =1
$$
Then
$$
\lim_{t\rightarrow +\infty} x_C(t) = x_C^* \; \text{ where }\;
x_C^* = (e_C(G_{\mathscr{C}})\cdot x_C(T')) {\bf 1}_{|C|}.
$$
Let us remark that for all $t\ge T'$,
\begin{equation}
\label{eq:rec} x_C(t+1)-x_C^*=P_C(G_{\mathscr{C}})(x_C(t)-x_C^*).
\end{equation}
Let $f_C(G_{\mathscr{C}})$ be a left eigenvector of
$P_C(G_{\mathscr{C}})$ associated to eigenvalue
$\lambda_2(P_C(G_{\mathscr{C}}))$:
$$f_C(G_{\mathscr{C}})^\top P_C(G_{\mathscr{C}})=\lambda_2(P_C(G_{\mathscr{C}}))f_C(G_{\mathscr{C}})^\top.$$
Then, it follows from equation (\ref{eq:rec}) that for all $t\ge T'$,
$$
f_C(G_{\mathscr{C}})\cdot(x_C(t)-x_C^*)=  f_C(G_{\mathscr{C}})
\cdot(x_C(T')-x_C^*) \lambda_2(P_C(G_{\mathscr{C}}))^{(t-T')}.
$$
Therefore, by the Cauchy-Schwarz inequality, we have for all $t\ge T'$
\begin{eqnarray*}
\|x_C(t)-x_C^*\| & \ge & \frac{|f_C(G_{\mathscr{C}})\cdot(x_C(t)-x_C^*)|}{\|f_C(G_{\mathscr{C}})\|}\\
& \ge & \frac{|f_C(G_{\mathscr{C}})
\cdot(x_C(T')-x_C^*)|}{\|f_C(G_{\mathscr{C}})\|}
|\lambda_2(P_C(G_{\mathscr{C}}))|^{(t-T')}.
\end{eqnarray*}
Since we assumed $|\lambda_2(P_C(G_{\mathscr{C}}))| \ge \rho$, we
have for all $t\ge T'$
\begin{equation}
\label{eq:ineq1} \|x_C(t)-x_C^*\| \ge
\frac{|f_C(G_{\mathscr{C}})\cdot(x_C(T')-x_C^*)|}{\|f_C(G_{\mathscr{C}})\|
\rho^{T'}} \rho^{t}.
\end{equation}
Now, let us remark that it follows from Assumption~\ref{assum4} that for all $t \in \N$
\begin{equation}
\label{eq:ineq2}
\|x_C(t)-x_C^*\| \le \sqrt{|C|} M \underline{\rho}^{t}.
\end{equation}
Inequalities (\ref{eq:ineq1}) and (\ref{eq:ineq2}) give for all $t\ge T'$
$$
\frac{|f_C(G_{\mathscr{C}})\cdot(x_C(T')-x_C^*)|}{\|f_C(G_{\mathscr{C}})\|
\rho^{T'}} \rho^{t} \le \sqrt{|C|} M \underline{\rho}^{t} .
$$
Since $\underline{\rho} < \rho$, the previous inequality holds for
all $t\ge T'$ if and only if
$|f_C(G_{\mathscr{C}})\cdot(x_C(T')-x_C^*)|=0$. Therefore,
$f_C(G_{\mathscr{C}})\cdot x_C(T')=
f_C(G_{\mathscr{C}})\cdot(x_C(T')-x_C^*)=0$ which means that
$x(T')\in H_C(G_{\mathscr{C}})$. Therefore,
$$
x^0 = P(G(0))^{-1} P(G(1))^{-1} \dots P(G(T'-1))^{-1} x(T') \in
\bigcup_{Q \in \mathcal Q_{T'}} Q H_C(G_{\mathscr{C}})
$$
which leads to $x^0\in Y^0$. Therefore, we have proved that for
all vectors of initial opinions $x^0\in \R^{n}\setminus (X^0\cup
Y^0)$,  for all communities $C\in {\mathscr{C}}$ such that
$|C|\ge2$, $|\lambda_2(P_C(G_{\mathscr{C}}))| < \rho$. We conclude
by remarking that $X^0\cup Y^0$ is a set of Lebesgue measure $0$.
\end{proof}

In this section, we showed that the community structure
${\mathscr{C}}$ satisfies some properties related to the
eigenvalues of the matrix $P_{C}(G_{\mathscr{C}})$, for $C\in
\mathscr{C}$. In the following, we use this result to address the
problem of community detection in graphs.

\section{Application: Community Detection in Graphs}

In this section, we propose to use a model of opinion dynamics with decaying confidence to address the problem of community detection in graphs. 

\subsection{The Community Detection Problem} In the usual sense,
communities in a graph are groups of vertices such that the
concentration of edges inside one community is high and the
concentration of edges between communities is comparatively low.
Because of the increasing need of analysis tools for understanding
complex networks in social sciences, biology, engineering or
economics, the community detection problem has attracted a lot of
attention in the recent years. The problem of community detection
is however not rigorously defined mathematically. One reason is
that community structures may appear at different scales in the
graph: there can be communities inside communities. Another reason
is that communities are not necessarily disjoint and can overlap. 
We refer the reader to the excellent
survey~\cite{fortunato2009} and the references therein for more
details.
Some formalizations of the community detection problem have been proposed in
terms of optimization of quality functions such as modularity~\cite{newman2004}
or partition stability~\cite{lambiotte2009}.

\subsubsection{Quality functions}
Modularity has been introduced in~\cite{newman2004}, the modularity of a partition measures how well the partition reflects the community structure of a graph. More precisely, let $G=(V,E)$ be an undirected graph with $E$ symmetric and anti-reflexive. For a vertex $i\in V$ the degree $d_i$ of $i$ is the number of neighbors of $i$ in $G$. Let $\mathcal P$ be a partition of $V$. Essentially, the modularity ${\mathsf Q}(\mathcal P)$ of the partition $\mathcal P$ is the proportion of edges within the classes of the partition minus the expected proportion of such edges, where the expected number of edges between vertex $i$ and $j$ is assumed to be $d_i d_j/|E|$:
$$
{\mathsf Q}(\mathcal P)=\frac{1}{|E|} \sum_{I\in \mathcal P} \sum_{i,j \in I} \left( a_{ij} - \frac{d_i d_j}{|E|} \right)
$$
where $a_{ij}$ are the coefficients of the adjacency matrix of $G$ ($a_{ij}=1$ if $(i,j)\in E$, $a_{ij}=0$ otherwise).
The higher the modularity, the better the partition reflects the community structure of the graph. Thus, it is reasonable to formulate the community detection problem as modularity maximization.
However, it has been shown that this optimization problem is NP-complete~\cite{brandes2008}. 
Therefore, approaches for community detection rely mostly on heuristic methods. 
In~\cite{newman2006}, a modularity optimization algorithm is proposed based on spectral relaxations. 
Using the eigenvectors of the modularity matrix, it is possible to determine a good initial guess of the community structure of the graph. Then, the obtained partition is refined using local combinatorial optimization.
In ~\cite{blondel2008}, a hierarchical combinatorial approach for modularity optimization is presented. This algorithm 
which can be used for very large networks,
is
currently the one that obtains the partitions with highest modularity.

However, modularity has the drawback that it fails to capture communities at different scales.
The notion of partition stability~\cite{lambiotte2009} makes it possible to overcome this limitation.  
Let us consider a continuous-time process associated with a random walk over the graph $G$ where transitions are triggered by a homogeneous Poisson process. Assume that the initial distribution is the stationary distribution.
Then, the stability at time $t\in \R^+$ of the partition $\mathcal P$ is defined as 
$$
{\mathsf R}(\mathcal P,t) = \sum_{I\in \mathcal P} p(I,t)-p(I,\infty)
$$ 
where $p(I,t)$ is the probability for a walker to be in the class $I$ initially and at time $t$. Stability measures the quality of a partition by giving a positive contribution to communities from which a random walker is unlikely to escape within the given time scale $t$.
For small values of $t$, this gives more weights to small communities whereas for larger values of $t$, larger communities are favored. Thus, by searching the partitions maximizing the stability for several values of $t$, one can detect communities at several scales.

\subsubsection{Eigenvalues of the normalized Laplacian matrix}
We give an alternative formulation of the community detection problem using a measure of connectivity of graphs given by the eigenvalues of their {\it normalized Laplacian matrix}.
Let $G=(V,E)$ be an undirected graph with $V=\{1,\dots,n\}$, with $n\ge 2$. For a vertex
$i\in V$, the degree $d_i(G)$ of $i$ is the number of neighbors of $i$ in $G$. The normalized Laplacian of the graph
$G$ is the matrix $L(G)$ given by
$$
L_{ij}(G)=
\left\{
\begin{array}{ll}
1 & \text{if } i=j \text{ and } d_i(G)\ne 0, \\
\frac{-1}{\sqrt{d_i(G) d_j(G)}} & \text{if } (i,j)\in E, \\
0 & \text{otherwise.}
\end{array}
\right.
$$
Let us review some of the properties of the normalized Laplacian matrix (see e.g.~\cite{chung1997}).
$\mu_1(L(G))=0$ is always an eigenvalue of $L(G)$, it is simple if and only if $G$ is connected.
All other eigenvalues are real and belong to the interval $[0,2]$.
The second smallest eigenvalue of the normalized Laplacian matrix is denoted $\mu_2(L(G))$. It can serve as an algebraic measure of the connectivity: $\mu_2(L(G))=0$ if the graph $G$ has two
distinct connected components, $\mu_2(L(G))=n/(n-1)$ if the graph is the complete graph (for all $i,j \in V$, $i\ne j$, $(i,j)\in E$), in the other cases $\mu_2(L(G))\in (0,1]$.

\begin{remark} The second smallest eigenvalue of the (non-normalized) Laplacian matrix is called algebraic connectivity
of a graph. In this paper, we prefer to use the eigenvalues of the normalized Laplacian matrix because
it is less sensitive to the size of the graph. For instance, if $G$ is the complete graph then $\mu_2(L(G))=n/(n-1)$ whereas its algebraic connectivity is $n$.
\end{remark}

Let $\mathcal P$ be a partition of the set of vertices $V$.
For all $I\in {\mathcal P}$, with $|I|\ge 2$, $L(G_I)$ denotes the normalized Laplacian matrix of the graph
$G_I=(I,E_I)$ consisting of the set of vertices $I$ and of the set of edges of $G$ between elements of $I$.
Let us define the following measure associated to the partition $\mathcal P$
$$
\underline{\mu_2}(\mathcal P) = {\min_{I\in \mathcal{P}, |I|\ge 2} \mu_2(L(G_{I}))}.
$$
Essentially, $\underline{\mu_2}(\mathcal P)$ measures the connectivity of the less connected component of $G_{\mathcal P}$.


We now propose a new formulation of the community detection problem:
\begin{problem}\label{prob} Given a graph $G=(V,E)$ and a real number $\delta \in (0,1]$,
find a partition ${\mathcal P}$ of $V$ such that for all $I\in  \mathcal  P$, such that $|I|\ge 2$,
$
\mu_2(L(G_{I})) > \delta
$
(i.e. $\underline{\mu_2}(\mathcal P) >\delta$).
\end{problem}

If $\mu_2(L(G))>\delta$, it is sufficient to choose the trivial partition $\mathcal P=\{V\}$.
If $\delta \ge \mu_2(L(G))$, then we want to find groups of vertices that are more densely connected than the global graph. This coincides with the notion of community. The larger $\delta$ the more densely connected the communities.
This makes it possible to search for communities at different scales of the graph.
Let us remark that Problem~\ref{prob} generally has several solutions. Actually, the trivial partition $\mathcal P=\{\{1\},\dots,\{n\}\}$ is always a solution. 
In the following, we show how non-trivial solutions to Problem~\ref{prob} can be obtained using a model of opinion dynamics with decaying confidence. 
We evaluate the modularity of the partitions we obtain and compare our results to those obtained using modularity optimization algorithms presented in~\cite{newman2006,blondel2008}.

\subsection{Opinion Dynamics for Community Detection} Let $\alpha\in(0,1/2)$, we consider the opinion dynamics with decaying confidence model given by:
\begin{equation}
\label{eq:model2}
x_i(t+1)= \left\{
\begin{array}{ll}
\displaystyle{x_i(t) + \frac{\alpha}{|N_i(t)|} \sum_{j\in N_i(t)} (x_j(t)-x_i(t))} & \text{if } N_i(t)\ne \emptyset \\
x_i(t) & \text{if } N_i(t) = \emptyset
\end{array}
\right.
\end{equation}
where $N_i(t)$ is given by equation~(\ref{eq:neigh2}). It is straightforward to check that this model is a particular case of the model given by equations~(\ref{eq:dynamics}) and~(\ref{eq:neigh}) and that Assumption~\ref{assum1} (Stochasticity) holds.
Moreover, since $\alpha \in (0,1/2)$ it follows that for all $i\in V$, $t\in \N$, $p_{ii}(t)>1/2$. Therefore
the matrix $P(t)$ is strictly diagonally dominant and hence it is invertible. Also, $P(t)=P(G(t))$,
where for a subgraph $G'$,
$P(G')= Id -\alpha Q(G')$ where $Id$ is the identity matrix and
\begin{equation}
\label{eq:Q}
Q_{ij}(G')=\left\{
\begin{array}{ll}
1 & \text{if } i=j \text{ and } d_i(G')\ne 0, \\
\frac{-1}{d_i(G')} & \text{if } (i,j)\in E', \\
0 & \text{otherwise.}
\end{array}
\right.
\end{equation}
where $d_i(G')$ denotes the degree of $i$ in the graph $G'$. Therefore, Assumption~\ref{assum2} (Invertibility and graph to matrix mapping) holds as well.
Let us remark that the matrix $P(t)$ is generally not average preserving and therefore Assumption~\ref{assum3}
does not hold.

Before stating the main result of this section, we need to prove the following lemma~:
\begin{lemma} \label{lem:eig}
Let $\mathcal P$ be a partition of $V$, $I\in \mathcal P$ such that $|I|\ge 2$. Then,
$\lambda$ is an eigenvalue of $P_I(G_{\mathcal P})$ if and only if $\mu=(1-\lambda)/\alpha$ is an eigenvalue of
$L(G_I)$.
\end{lemma}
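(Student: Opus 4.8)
The plan is to reduce the statement to the classical similarity between the random-walk normalized Laplacian and the symmetric normalized Laplacian of $G_I$. The crucial preliminary observation is that, since $I$ is a class of the partition $\mathcal P$, the spanning subgraph $G_{\mathcal P}$ contains no edge joining a vertex of $I$ to a vertex outside $I$. Consequently, for every $i\in I$ the neighbours of $i$ in $G_{\mathcal P}$ all lie in $I$, so that $d_i(G_{\mathcal P})=d_i(G_I)$, and the edges of $G_{\mathcal P}$ internal to $I$ are exactly the edges $E_I$ of $G_I$. First I would use this to compute the principal submatrix $P_I(G_{\mathcal P})$ explicitly: starting from $P(G_{\mathcal P})=Id-\alpha Q(G_{\mathcal P})$ and the definition~(\ref{eq:Q}), restricting to the indices of $I$ and comparing entries one by one shows that $Q_I(G_{\mathcal P})=Q(G_I)$, where $Q(G_I)$ is the matrix~(\ref{eq:Q}) associated to the induced graph $G_I$ itself. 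Hence $P_I(G_{\mathcal P})=Id-\alpha Q(G_I)$.

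Next I would perform the elementary eigenvalue translation. Since $P_I(G_{\mathcal P})=Id-\alpha Q(G_I)$, a vector $v$ satisfies $P_I(G_{\mathcal P})v=\lambda v$ if and only if $Q(G_I)v=\tfrac{1-\lambda}{\alpha}v$; therefore $\lambda$ is an eigenvalue of $P_I(G_{\mathcal P})$ if and only if $\mu=(1-\lambda)/\alpha$ is an eigenvalue of $Q(G_I)$. It then remains only to prove that $Q(G_I)$ and $L(G_I)$ have the same spectrum.

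For this last and main step I would exhibit an explicit similarity between the two matrices. Let $D$ be the diagonal matrix whose $i$-th entry is $d_i(G_I)$ when $d_i(G_I)\ne 0$ and $1$ otherwise, so that $D^{1/2}$ is invertible. Comparing entries through~(\ref{eq:Q}) and the definition of $L(G_I)$ shows that $Q(G_I)=D^{-1/2}L(G_I)D^{1/2}$: on the diagonal both sides equal $1$ (or $0$ at an isolated vertex), while for $(i,j)\in E_I$ one has $d_i^{-1/2}\bigl(-1/\sqrt{d_i d_j}\bigr)d_j^{1/2}=-1/d_i$, which matches $Q_{ij}(G_I)$. Being conjugate by the invertible diagonal matrix $D^{1/2}$, the matrices $Q(G_I)$ and $L(G_I)$ are similar and thus share the same eigenvalues, which completes the argument. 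I expect the only delicate points to be the bookkeeping of the first step --- checking that passing to the submatrix indexed by $I$ genuinely recovers the matrix of the \emph{induced} graph $G_I$ and not something involving degrees in the larger graph --- together with the treatment of possible isolated vertices of $G_I$, for which the convention $\sqrt{d_i}=1$ keeps $D^{1/2}$ invertible while leaving unchanged the corresponding zero rows and columns of both $Q(G_I)$ and $L(G_I)$.
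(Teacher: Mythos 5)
Your proposal is correct, and its first two steps coincide exactly with the paper's proof: you identify $P_I(G_{\mathcal P})=Id-\alpha Q(G_I)$ (you in fact justify this more carefully than the paper does, by checking that restricting to $I$ recovers the matrix of the induced graph, since $G_{\mathcal P}$ has no edge leaving $I$ and so degrees agree), and you then perform the same affine eigenvalue translation $\mu=(1-\lambda)/\alpha$. Where you diverge is the final cospectrality step. You conjugate: $Q(G_I)=D^{-1/2}L(G_I)D^{1/2}$, with the convention $D_{ii}=1$ at isolated vertices so that $D^{1/2}$ is invertible; this is a genuine similarity, so it even preserves Jordan structure, but it needs that ad hoc fix precisely because the true degree matrix can be singular. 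The paper instead writes $L(G_I)=D(G_I)R(G_I)$ and $Q(G_I)=R(G_I)D(G_I)$ with $D(G_I)$ the diagonal matrix of $\sqrt{d_i(G_I)}$, and invokes the fact that $AB$ and $BA$ have the same characteristic polynomial for square $A,B$; this factorization trick requires no invertibility and hence handles isolated vertices with no special convention. The two mechanisms are close cousins --- the paper's factorization is exactly what your similarity becomes when $D$ is invertible --- so the difference is one of technique rather than of strategy, and both are complete proofs of the lemma.
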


\begin{proof} First, let us remark that $P_I(G_{\mathcal P})=Id-\alpha Q(G_I)$ where $Q(G_I)$ is defined as in equation~(\ref{eq:Q}). Then, let us introduce the matrices $R(G_I)$ and $D(G_I)$ defined by
$$
R_{ij}(G_I)=\left\{
\begin{array}{ll}
\frac{1}{\sqrt{d_i(G_I)}} & \text{if } i=j \text{ and } d_i(G_I)\ne 0, \\
\frac{-1}{d_i(G_I)\sqrt{d_j(G_I)}} & \text{if } (i,j)\in E_I, \\
0 & \text{otherwise.}
\end{array}
\right.
$$
and
$$
D_{ij}(G_I)=\left\{
\begin{array}{ll}
\sqrt{d_i(G_I)} & \text{if } i=j, \\
0 & \text{otherwise.}
\end{array}
\right.
$$
Let us remark that $L(G_I)=D(G_I)R(G_I)$ and $Q(G_I)=R(G_I)D(G_I)$. It follows that $L(G_I)$ and $Q(G_I)$
have the same eigenvalues. The stated result is then obtained from the fact that the matrix $Q(G_I)=(Id-P_I(G_{\mathcal P}))/\alpha$.
\end{proof}

We now state the main result of the section which is a direct consequence of Theorem~\ref{th:alg} and Lemma~\ref{lem:eig}:
\begin{corollary}\label{corol} Let $\rho=1-\alpha \delta$, under Assumption~\ref{assum4} (Fast convergence), for almost all vectors of initial opinions $x^0$, the set of communities ${\mathscr{C}}$ obtained by the opinion dynamics model~(\ref{eq:model2}) is a solution to Problem~\ref{prob}.
\end{corollary}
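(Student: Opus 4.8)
The plan is to combine Theorem~\ref{th:alg} with Lemma~\ref{lem:eig} by translating the eigenvalue bound on $P_C(G_{\mathscr{C}})$ into the desired bound on $\mu_2(L(G_C))$. First I would observe that the hypotheses of Corollary~\ref{corol} are exactly what is needed to invoke Theorem~\ref{th:alg}: the model~(\ref{eq:model2}) has already been shown to satisfy Assumption~\ref{assum1} (Stochasticity) and Assumption~\ref{assum2} (Invertibility and graph to matrix mapping), and Assumption~\ref{assum4} (Fast convergence) is assumed directly in the statement. Hence, for almost all initial opinions $x^0$, Theorem~\ref{th:alg} gives that every community $C\in\mathscr{C}$ with $|C|\ge 2$ satisfies $|\lambda_2(P_C(G_{\mathscr{C}}))|<\rho$.

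Next I would translate this spectral statement through Lemma~\ref{lem:eig}. Taking the partition $\mathcal P=\mathscr{C}$ (so that $G_{\mathcal P}=G_{\mathscr{C}}$ and $G_I=G_C$), the lemma tells us that the eigenvalues of $P_C(G_{\mathscr{C}})$ and those of $L(G_C)$ are in exact correspondence via $\mu=(1-\lambda)/\alpha$, equivalently $\lambda=1-\alpha\mu$. Since $\alpha\in(0,1/2)$ and the eigenvalues $\mu_k(L(G_C))$ lie in $[0,2]$, each $\lambda=1-\alpha\mu$ is real and the correspondence is order-reversing: the largest $\mu$ maps to the smallest $\lambda$, and the Perron eigenvalue $\lambda_1=1$ corresponds to $\mu_1(L(G_C))=0$. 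Because $C$ is a connected component of $G_{\mathscr{C}}$ (established in the proof of Theorem~\ref{th:alg}), $G_C$ is connected, so $\mu_1(L(G_C))=0$ is simple and all other eigenvalues lie in $(0,2]$; thus all $\lambda\ne 1$ satisfy $\lambda<1$, so the second-largest eigenvalue in modulus, $\lambda_2(P_C(G_{\mathscr{C}}))$, is itself real and corresponds precisely to $\mu_2(L(G_C))$, the second-smallest Laplacian eigenvalue.

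With $\lambda_2=1-\alpha\mu_2(L(G_C))\in(0,1)$ (positivity following from $\alpha\mu_2\le\alpha\cdot 2<1$, and $\lambda_2<1$ from $\mu_2>0$), the bound $|\lambda_2(P_C(G_{\mathscr{C}}))|<\rho$ becomes $1-\alpha\mu_2(L(G_C))<\rho=1-\alpha\delta$. Rearranging and dividing by $\alpha>0$ yields $\mu_2(L(G_C))>\delta$. Since this holds for every community $C\in\mathscr{C}$ with $|C|\ge 2$, the partition $\mathscr{C}$ satisfies the defining condition of Problem~\ref{prob}, namely $\underline{\mu_2}(\mathscr{C})>\delta$, which completes the proof for almost all $x^0$.

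The only delicate point I anticipate is the identification $\lambda_2(P_C(G_{\mathscr{C}}))\leftrightarrow\mu_2(L(G_C))$: one must be careful that the eigenvalue ranked second by modulus on the stochastic side corresponds to the eigenvalue ranked second from the bottom on the Laplacian side. This hinges on the facts that all $\mu_k\in[0,2]$ and $\alpha<1/2$ force every $\lambda=1-\alpha\mu$ to be real and to lie in $(0,1]$, so modulus coincides with value and the affine map $\mu\mapsto 1-\alpha\mu$ reverses order while fixing the correspondence $\mu_1=0\leftrightarrow\lambda_1=1$. Verifying this order-matching carefully is the crux; the remaining algebra is routine.
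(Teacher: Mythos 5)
Your proof is correct and takes the same route as the paper, which states Corollary~\ref{corol} as a direct consequence of Theorem~\ref{th:alg} and Lemma~\ref{lem:eig} without spelling out the details. Your careful verification of the order-matching $\lambda_2(P_C(G_{\mathscr{C}}))\leftrightarrow\mu_2(L(G_C))$ --- using that all eigenvalues $\lambda=1-\alpha\mu$ are real and lie in $(0,1]$ because $\mu\in[0,2]$ and $\alpha<1/2$, so the map $\mu\mapsto 1-\alpha\mu$ is order-reversing and sends $\mu_1=0$ to $\lambda_1=1$ --- supplies exactly the argument the paper leaves implicit.
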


\subsection{Examples}

In this section, we propose to evaluate experimentally the validity of our approach on three benchmarks taken from~\cite{newman2006}. 

\subsubsection{Zachary karate club}
We propose to evaluate our approach on a standard benchmark for community detection: the karate club network
initially studied by Zachary in~\cite{zachary1977}. This is a social network with $34$ agents shown on the top left part
of Figure~\ref{fig}. The original study shows the existence of two communities represented on the figure by squares and triangles.

We propose to use our opinion dynamics model (\ref{eq:model2}) to uncover the community structure of this network.
We chose $4$ different values for $\delta$ and $2$ different values for parameters $R$ and $\alpha$. The parameter $\rho$ was chosen according 
to Corollary~\ref{corol}: $\rho=1-\alpha \delta$. 
For each combination of parameter value, the model was simulated
for $1000$ different vectors of initial opinions chosen randomly in $[0,1]^{34}$. Simulations were performed as long as
enabled by floating point arithmetics.

The experimental results are reported in
Table~\ref{tab1}. For each combination of parameter value, we indicate
the partitions in communities that are the most frequently obtained after running the
opinion dynamics model. For each partition ${\mathscr{C}}$, we
give the number of communities in the partition, the measure $\underline{\mu_2}(\mathscr C)$, this value being
greater than $\delta$ indicates that Problem~\ref{prob} has been
solved. We computed the modularity $\mathsf Q(\mathscr{C})$ in order to
evaluate the quality of the obtained partition. We also indicate
the number of times that each partition occurred over the $1000$
simulations of the opinion dynamics model.

We can check in Table~\ref{tab1} that all the partitions are solutions of Problem~\ref{prob}.
Let us remark that in general the computed partition depends on the initial vector of opinions, this is the case for
$\delta=0.3$ and $\delta=0.4$. Also, changing the parameters $R$ and $\alpha$ seems to have some effect on the probability of obtaining a given partition.
For instance, for $\delta=0.3$, the probabilities of obtaining one partition are significantly different for $R=1$ and $R=10$. 
Also, for $\delta=0.4$, the probabilities are slightly different for $\alpha=0.1$ and $\alpha=0.2$. 

\begin{table}[!t]
\begin{center}
\begin{tabular}{|c|c|c|c|c|c|c|c|}
\hline
$\delta$ & $|\mathscr{C}|$ & $\underline{\mu_2}(\mathscr{C})$ & ${\sf Q}(\mathscr{C})$
& \hspace{-0.4cm} \begin{scriptsize} \begin{tabular}{c} Occurences  \\ $R=1$, $\alpha=0.1$ \end{tabular} \end{scriptsize} \hspace{-0.4cm}
& \hspace{-0.4cm} \begin{scriptsize} \begin{tabular}{c} Occurences   \\  $R=10$, $\alpha=0.1$ \end{tabular} \end{scriptsize} \hspace{-0.4cm}
& \hspace{-0.4cm} \begin{scriptsize} \begin{tabular}{c} Occurences \\ $R=1$, $\alpha=0.2 $ \end{tabular} \end{scriptsize} \hspace{-0.4cm}
& \hspace{-0.4cm} \begin{scriptsize} \begin{tabular}{c} Occurences  \\ $R=10$, $\alpha=0.2$ \end{tabular} \end{scriptsize} \hspace{-0.4cm} \\
\hline
\hline
0.1 & 1 & 0.132 & 0 & 1000 & 1000 & 1000 & 1000 \\
\hline
0.2 & 2 & 0.250 & 0.360 & 1000 & 999 & 1000 & 999 \\
\hline
0.3 & 3 & 0.334 & 0.399 & 691 & 105 & 679 & 63 \\
0.3 & 3 & 0.363 & 0.374 & 283 & 891 & 298 & 937 \\
\hline
0.4 & 4 & 0.566 & 0.417 & 924 & 994 & 884 & 897 \\
0.4 & 5 & 0.566 & 0.402 & 15 & 6 & 54 & 98\\
\hline
\end{tabular}
\vspace{0.3cm}
\caption{Properties of the partitions of the karate club network obtained by the opinion dynamics model ($1000$ different vectors of initial opinions for each combination of parameter values).}
\label{tab1}
\end{center}
\vspace{-0.5cm}
\end{table}

However, it is interesting to note that the partitions that are obtained for the same value of parameter $\delta$ 
have modularities of the same order of magnitude which seems to show that these are of comparable quality.
The partition with maximal modularity is obtained for $\delta=0.4$, it is a partition in 4 communities with modularity $0.417$. 
As a comparison, algorithms~\cite{newman2006,blondel2008} obtain a partition in 4 communities with modularity $0.419$.
This shows that our approach not only allows to solve Problem~\ref{prob} but also furnishes partitions with a good modularity which might seem surprising given the fact
that our approach, contrarily to~\cite{newman2006,blondel2008} does not try to maximize modularity. 

\begin{figure}[!t]
\begin{center}
\vspace{-6cm}
\begin{tabular}{cc}
\hspace{-2.5cm}
\includegraphics[angle=0,scale=0.6]{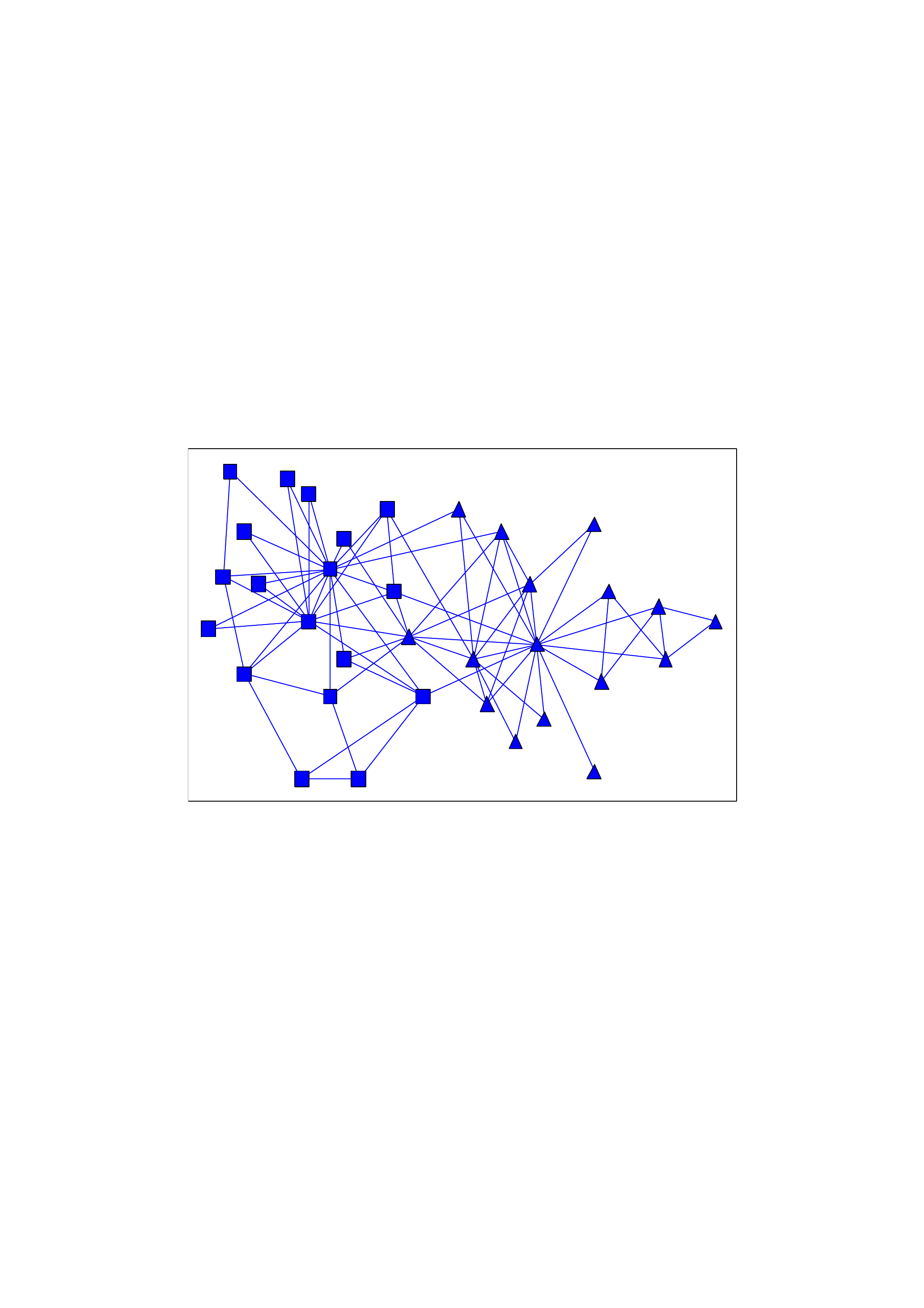}&
\hspace{-5cm}
\includegraphics[angle=0,scale=0.6]{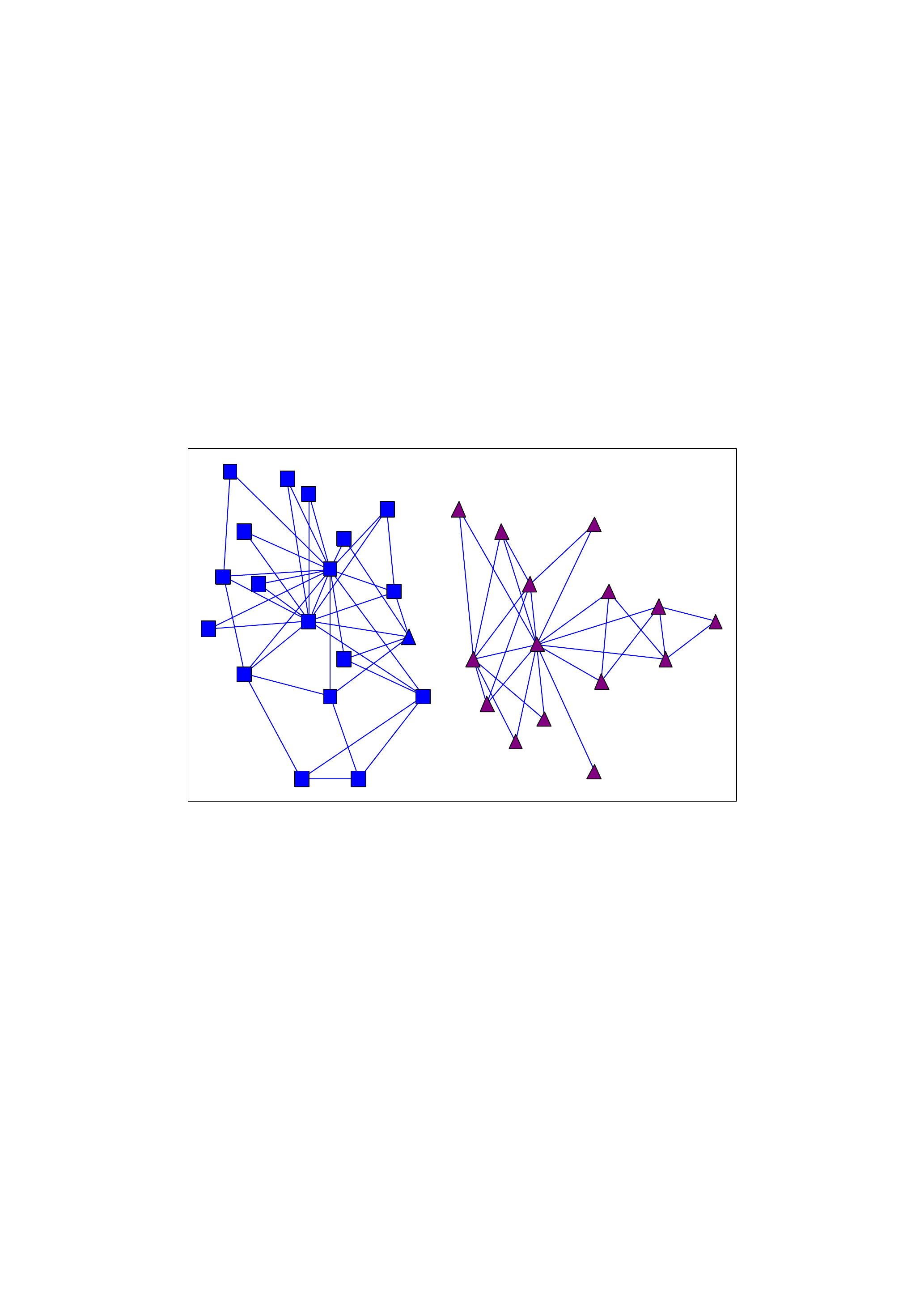}\\
\vspace{-13.5cm}
\\
\hspace{-2.5cm}
\includegraphics[angle=0,scale=0.6]{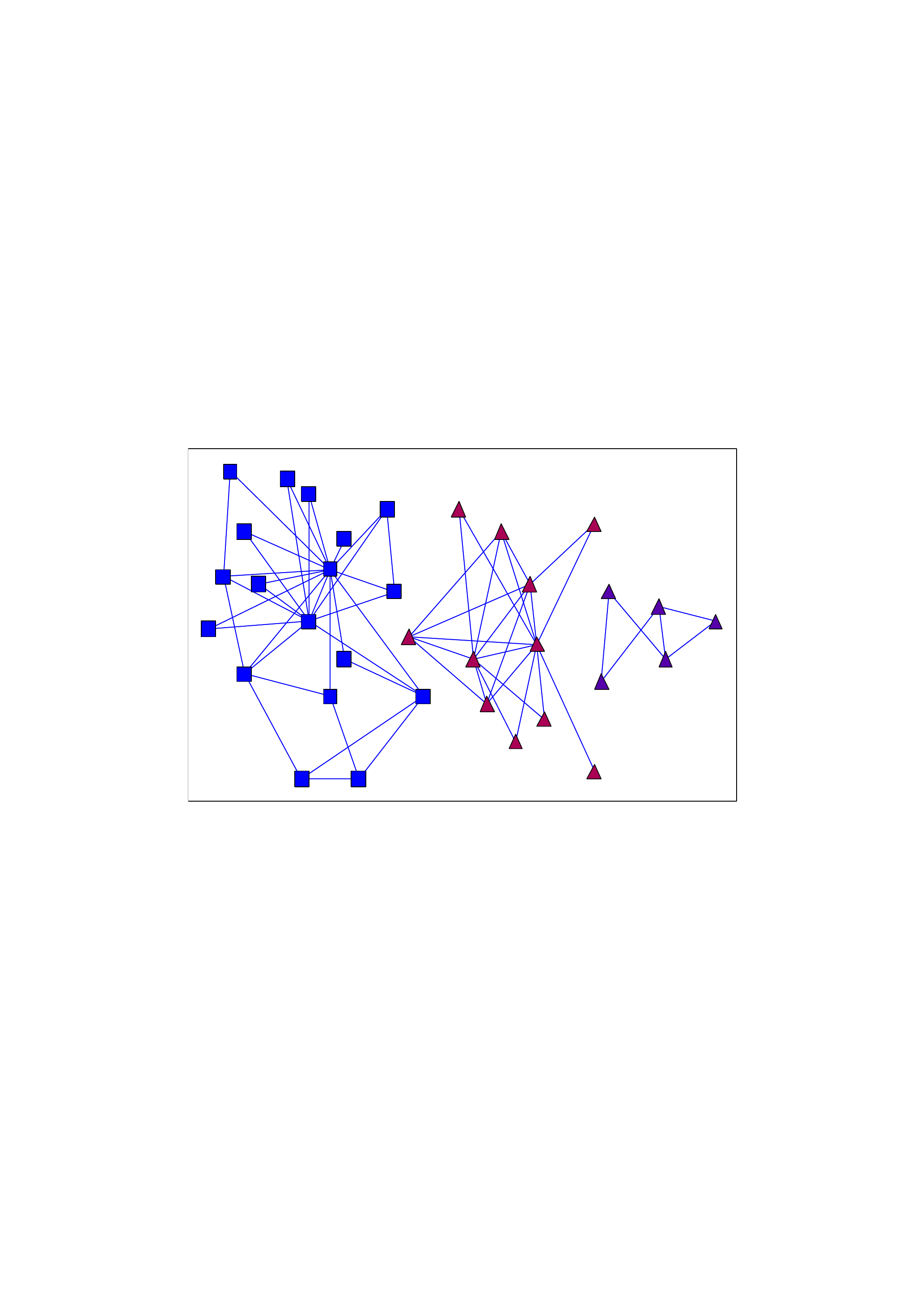}&
\hspace{-5cm}
\includegraphics[angle=0,scale=0.6]{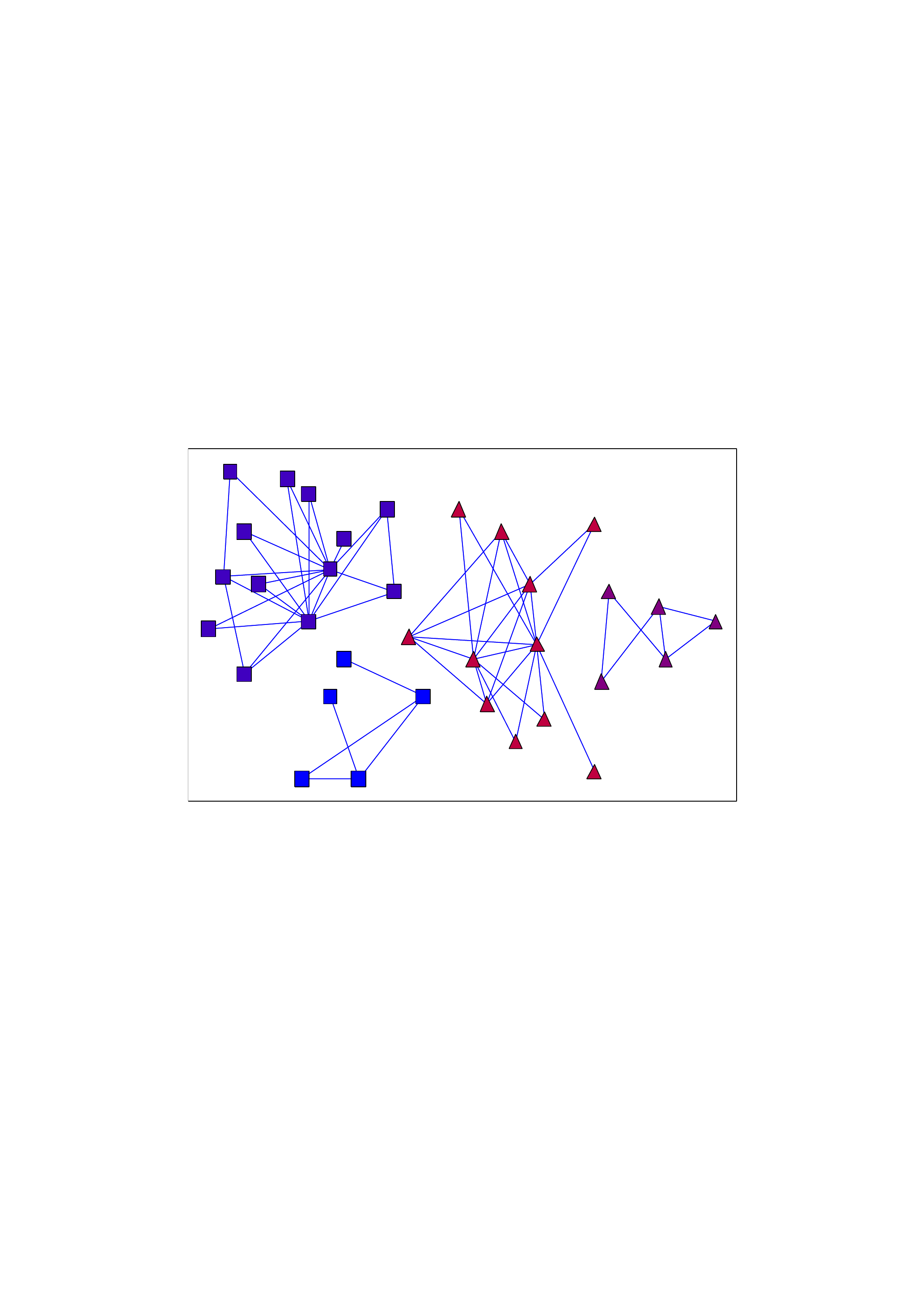}
\end{tabular}
\vspace{-7.cm} \caption{Graphs $G_{\mathscr{C}}$ for the most
frequently obtained partition of the karate club network for $R=1$, $\alpha=0.1$ and
$\delta=0.1$ (top left), $\delta=0.2$ (top right), $\delta=0.3$
(bottom left), $\delta=0.4$ (bottom right).} \label{fig}
\end{center}
\end{figure}

\begin{figure}[!t]
\begin{center}
\includegraphics[angle=0,scale=0.55]{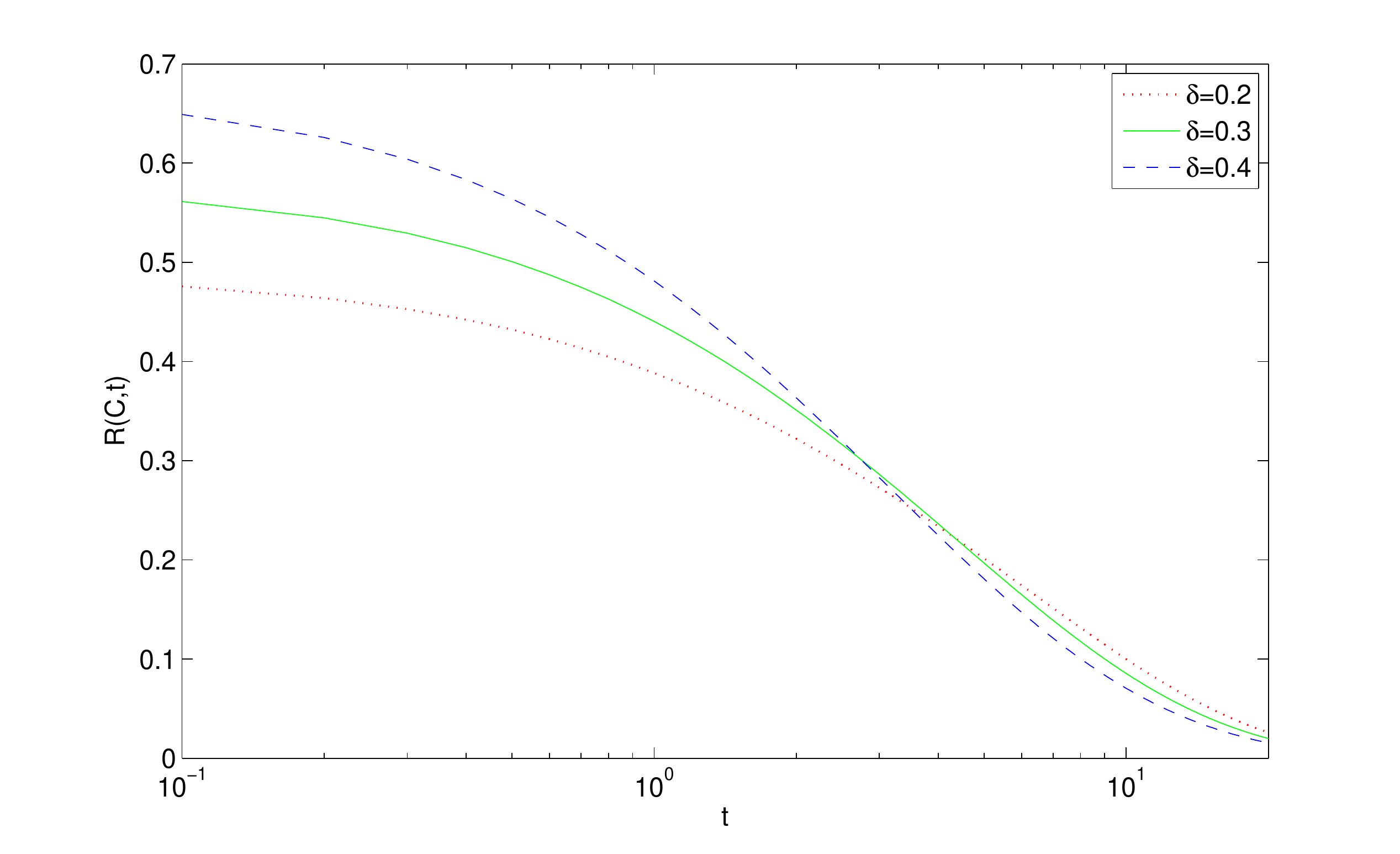}
\vspace{-1cm}
\caption{Stability of the partitions presented in Figure~\ref{fig}.} \label{fig1a}
\end{center}
\end{figure}

In Figure~\ref{fig}, we represented the graphs of communities
$G_{\mathscr{C}}$ that are the most frequently obtained for $R=1$, $\alpha=0.1$ and the
different values of $\delta$. It is interesting to remark that for
$\delta=0.2$ we almost obtained the communities that were reported
in the original study~\cite{zachary1977}. Only one agent has been
classified differently. One may argue that this agent has
originally $4$ neighbors in each community so it could be
classified in one or the other. It is also interesting to see that
our approach allows us to search for communities at different
scales of the graph. When $\delta$ increases, the communities
become smaller but more densely connected. This is corroborated by computing the stability of these partitions (see Figure~\ref{fig1a}).
We can see that the partition with maximal stability changes according to time-scale $t$: for small values of $t$ the partition in $4$ communities is better,
for intermediate values of $t$ the partition in $3$ communities has the largest stability, for large values of $t$ the partition in $2$ communities maximizes the stability.

\subsubsection{Books on American Politics} We propose to use our approach on an example consisting of a network of $105$ books on politics~\cite{newman2006},
initially compiled by V. Krebs (unpublished, see \verb+www.orgnet.com+). In this network, each vertex represents a book on American politics bought from Amazon.com.
An edge between two vertices means that these books are frequently purchased by the same buyer. The network is presented on  the top left part
of Figure~\ref{fig2} where the shape of the vertices represent the political alignement of the book (liberal, conservative, centrist).

We used our opinion dynamics model (\ref{eq:model2}) to uncover the community structure of this network.
We chose $3$ different values for $\delta$ and $2$ different values for parameters $R$ and $\alpha$. The parameter $\rho$ was chosen according 
to Corollary~\ref{corol}: $\rho=1-\alpha \delta$. 
For each combination of parameter value, the model was simulated
for $1000$ different vectors of initial opinions chosen randomly in $[0,1]^{105}$. Simulations were performed as long as
enabled by floating point arithmetics.
The experimental results are reported in Table~\ref{tab2}.

\begin{table}[!hb]
\begin{center}
\begin{tabular}{|c|c|c|c|c|c|c|c|}
\hline
$\delta$ & $|\mathscr{C}|$ & $\underline{\mu_2}(\mathscr{C})$ & ${\sf Q}(\mathscr{C})$
& \hspace{-0.4cm} \begin{scriptsize} \begin{tabular}{c} Occurences  \\ $R=1$, $\alpha=0.1$ \end{tabular} \end{scriptsize} \hspace{-0.4cm}
& \hspace{-0.4cm} \begin{scriptsize} \begin{tabular}{c} Occurences   \\  $R=10$, $\alpha=0.1$ \end{tabular} \end{scriptsize} \hspace{-0.4cm}
& \hspace{-0.4cm} \begin{scriptsize} \begin{tabular}{c} Occurences \\ $R=1$, $\alpha=0.2 $ \end{tabular} \end{scriptsize} \hspace{-0.4cm}
& \hspace{-0.4cm} \begin{scriptsize} \begin{tabular}{c} Occurences  \\ $R=10$, $\alpha=0.2$ \end{tabular} \end{scriptsize} \hspace{-0.4cm} \\
\hline
\hline
0.1 & 2 & 0.134 & 0.457 & 980 & 1000 & 640 & 581 \\
0.1 & 2 & 0.129 & 0.457 & 20 &  0    & 360 & 419 \\
\hline
0.15 & 3 & 0.182 & 0.499 & 898 & 1000 & 905 & 1000 \\
0.15 & 3 & 0.187 & 0.494 & 102 & 0 & 95 & 0 \\
\hline
0.2 & 4 & 0.269 & 0.523 & 678 & 1000 & 673 & 1000 \\
0.2 & 4 & 0.266 & 0.512 & 218 & 0    & 207 & 0 \\
0.2 & 4 & 0.269 & 0.520 & 49 & 0    & 72 & 0 \\
\hline
\end{tabular}
\vspace{0.3cm}
\caption{Properties of the partitions of the books network obtained by the opinion dynamics model ($1000$ different vectors of initial opinions for each combination of parameter values).}
\label{tab2}
\end{center}
\vspace{-0.5cm}
\end{table}

\begin{figure}[!t]
\begin{center}
\vspace{-4.8cm}
\begin{tabular}{cc}
\hspace{-1.cm}
\includegraphics[angle=0,scale=0.5]{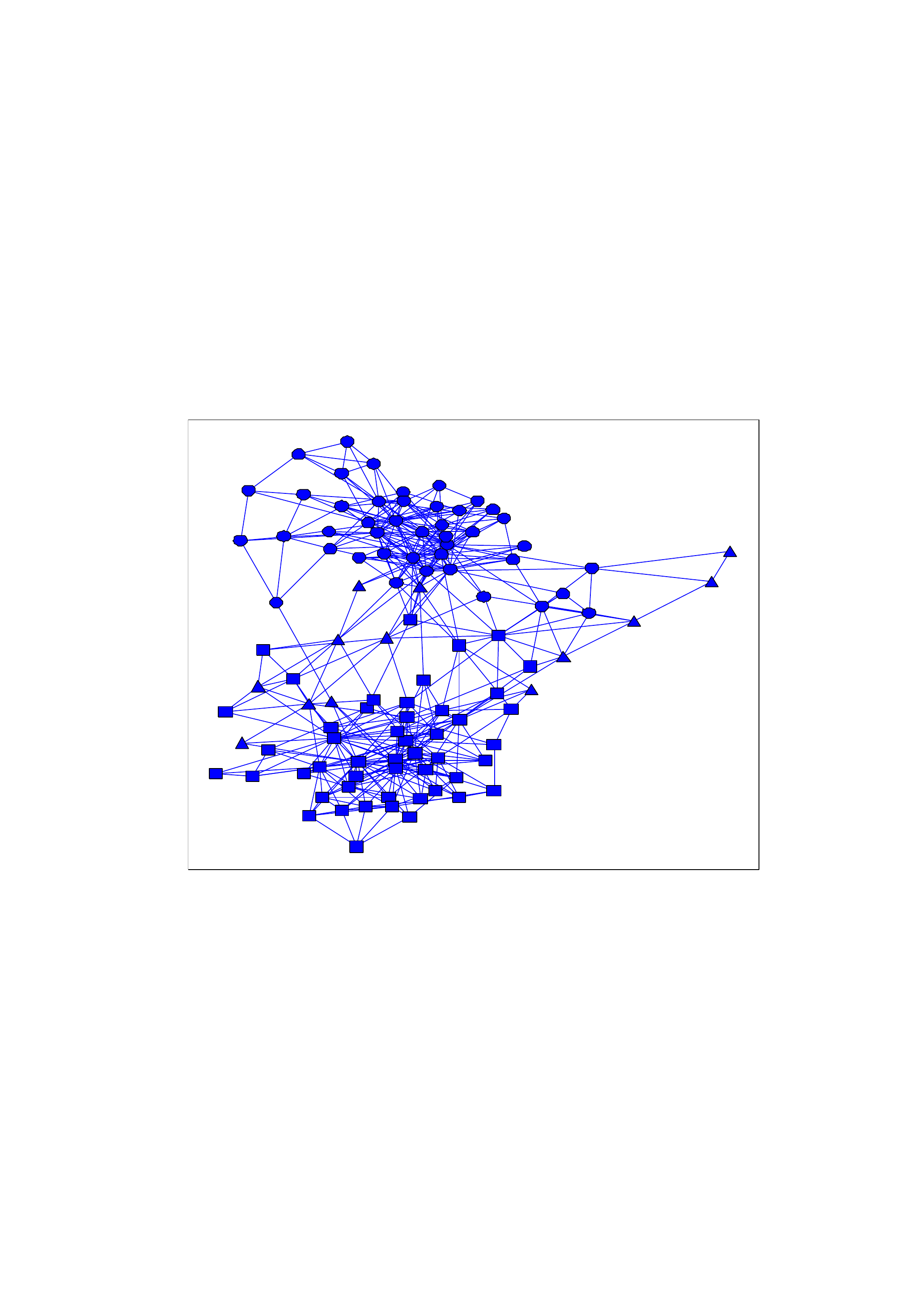}&
\hspace{-4cm}
\includegraphics[angle=0,scale=0.5]{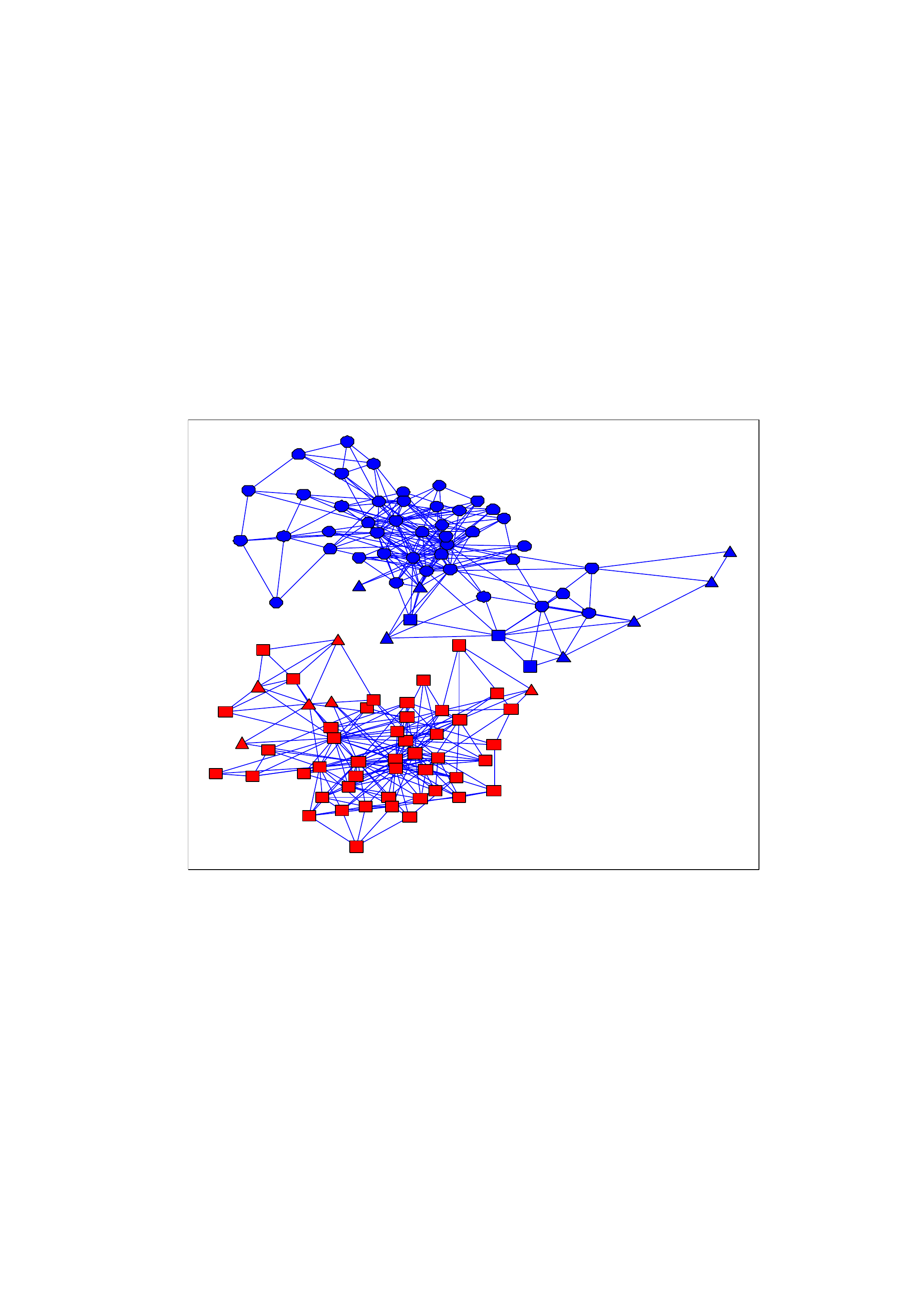}\\
\vspace{-10.2cm}
\\
\hspace{-1.cm}
\includegraphics[angle=0,scale=0.5]{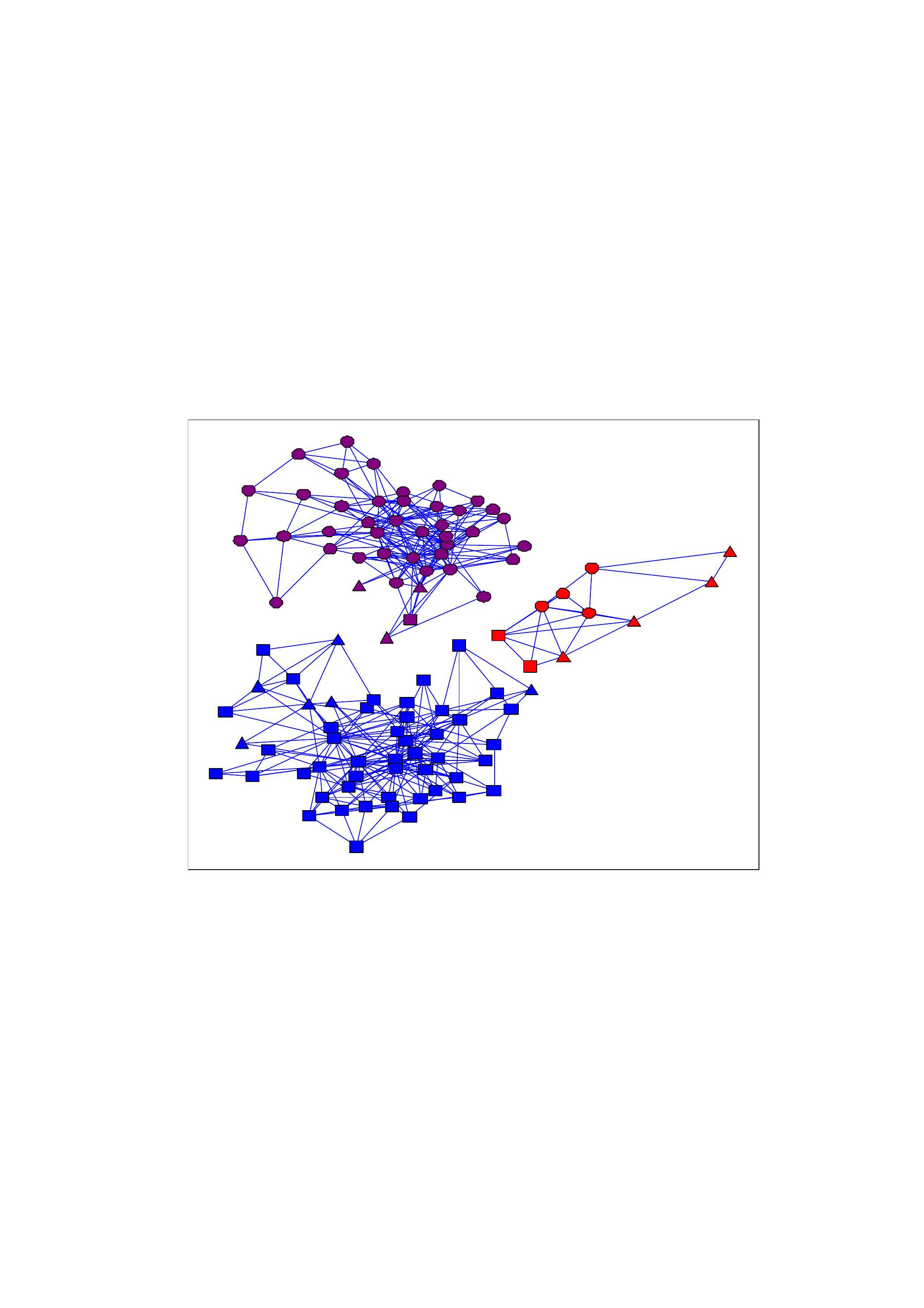}&
\hspace{-4cm}
\includegraphics[angle=0,scale=0.5]{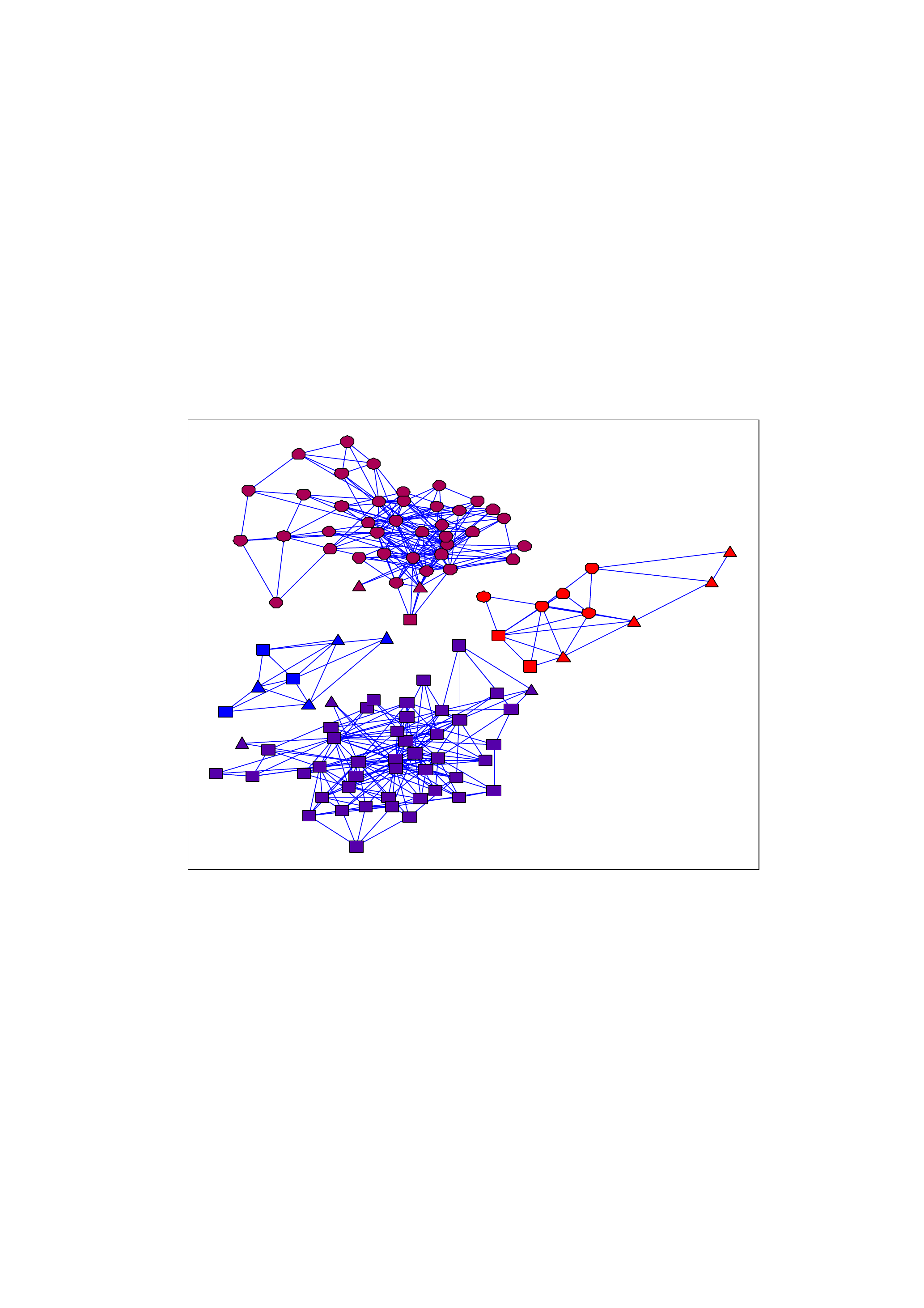}
\end{tabular}
\vspace{-5.2cm} \caption{Graphs $G_{\mathscr{C}}$ for the most
frequently obtained partition of the books network for $R=1$, $\alpha=0.1$: initial graph
(top left), $\delta=0.1$ (top right), $\delta=0.15$ (bottom left),
$\delta=0.2$ (bottom right). Shapes represent political alignment
of the books: circles are liberal, squares are conservative,
triangles are centrist.} \label{fig2}
\end{center}
\vspace{-1cm}
\end{figure}

Let us remark that the computed partitions are solutions to
the Problem~\ref{prob}. Also, for the same value of parameter $\delta$, the
modularity is very similar for all partitions. Actually,
all the partitions obtained for the same value of $\delta$ are
almost the same. As in the previous example, we can see that the choice of parameters $R$ and $\alpha$ affects the probability of obtaining a given partition.
The partition with maximal modularity is obtained for $\delta=0.2$, it is a partition in 4 communities with modularity $0.523$. 
As a comparison, algorithms~\cite{newman2006} and~\cite{blondel2008} obtain partitions in $4$ communities with modularity $0.526$ and $0.527$, respectively.
As we can see, our partition has a modularity that is quite close from those obtained by these algorithms.

In Figure~\ref{fig2}, we represented the graphs
of communities $G_{\mathscr{C}}$ that are the most frequently
obtained for the different values of $\delta$. Let us remark that
even though the information on the political alignment of the
books is not used by the algorithm, our approach allows to uncover
this information. Indeed, for $\delta=0.1$, we obtain $2$
communities that are essentially liberal and conservative. For
$\delta=0.2$, we then obtain $4$ communities: liberal,
conservative, centrist-liberal, centrist-conservative.

In Figure~\ref{fig2a}, we represented the stability of the partitions shown in Figure~\ref{fig2}.
As in the previous example, we can see that the partition with maximal stability changes according to time-scale $t$ which shows that our approach makes it possible to 
detect community at several scales using different values of parameter $\delta$.

\begin{figure}[!h]
\begin{center}
\vspace{-0.5cm}
\includegraphics[angle=0,scale=0.55]{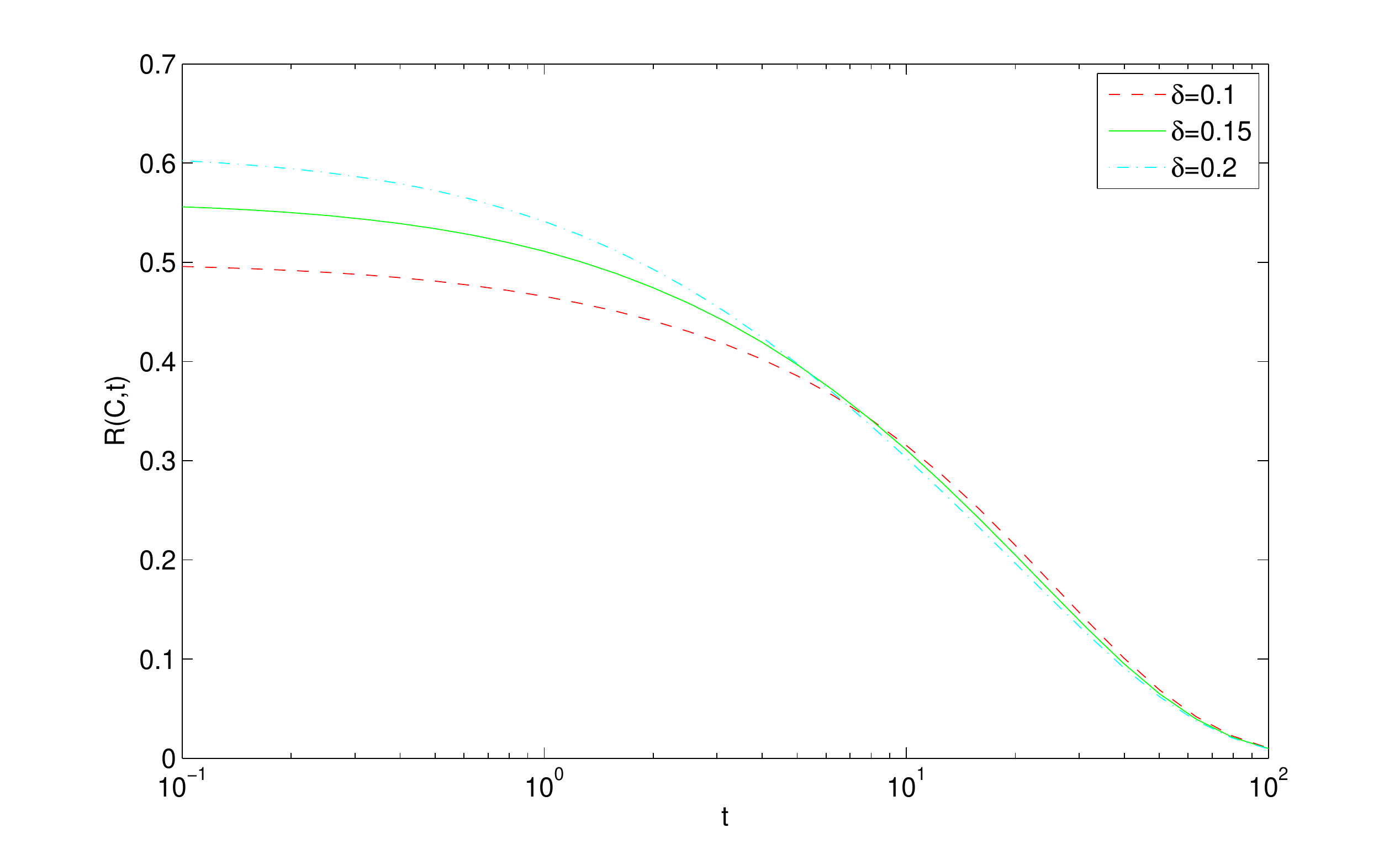}
\vspace{-1cm}
\caption{Stability of the partitions presented in Figure~\ref{fig2}.} \label{fig2a}
\end{center}
\vspace{-1cm}
\end{figure}

\subsubsection{Political blogs} The last example we consider consists of a significantly larger network of $1222$ political blogs~\cite{adamic2005}. 
In this network, an edge between two vertices means that one of the corresponding blogs contained a hyperlink to the other on its front page. 
We also have the information about the political alignment of each blog based on content: 636 are conservative, 586 are liberal.

The two previous examples show that the modularity of the obtained partitions does not depend much on the parameters $R$ and $\alpha$ or on the vector of initial opinions.
For this reason, we decided to apply our opinion dynamics model with parameters $R=1$ and $\alpha=0.1$.  We used $17$ values of $\delta$ between $0.05$ and $0.75$.
The parameter $\rho$ was chosen according to Corollary~\ref{corol}: $\rho=1-\alpha \delta$. For each value of $\delta$, the model was simulated
only once for a vector of initial opinion chosen randomly in $[0,1]^{1222}$. Simulations were performed as long as
enabled by floating point arithmetics.

The partition with maximal modularity was obtained for $\delta=0.4$. It is a partition in $12$ communities with modularity $0.426$.
There are 2 main communities: one with $653$ blogs, from which 94\% are conservative, and one with $541$ blogs, from which 98\% are liberal. 
The $28$ remaining blogs are distributed in $10$ tiny communities. 
When we progressively increase $\delta$, we can see that the size of the two large communities reduces moderately but progressively until $\delta=0.65$
where the conservative community splits into several smaller communities, the largest one containing $40$ blogs.
The liberal community remains until $\delta=0.725$ where it splits into smaller communities, the largest one containing $54$ blogs.

As a comparison,  algorithm~\cite{newman2006} obtains a partition in $2$ communities with modularity $0.426$ whereas 
algorithm~\cite{blondel2008} obtains a partition in $9$ communities with modularity $0.427$.
As we can see, the partition we obtain is very acceptable in terms of modularity. 

In Table~\ref{tabsum}, we give a comparative summary of the modularity of the partition obtained for the three examples by our approach and by the algorithms
presented in~\cite{newman2006,blondel2008}. Though slightly smaller, the modularity of the partition we obtain is comparable to that of other partitions which is actually surprising since our approach, contrarily to~\cite{newman2006,blondel2008} does not try to maximize modularity. 

\begin{table}[!h]
\begin{center}
\begin{tabular}{|c|c|c|c|}
\hline
Network & Karate  & Books  & Blogs  \\
\hline
Number of nodes & 34 & 105 & 1222 \\
\hline
\hline
This article       & 0.417 & 0.523 & 0.426 \\
\cite{newman2006}  & 0.419 & 0.526 & 0.426  \\
\cite{blondel2008} & 0.419 & 0.527 & 0.427 \\
\hline
\end{tabular}
\vspace{0.5cm}
\caption{Modularity of the partitions obtained by the approach presented in this paper and by  the algorithms
presented in~\cite{newman2006,blondel2008} for the three examples considered in this paper.}
\label{tabsum}
\end{center}
\end{table}

%


\section{Conclusion and Future Work}

In this paper, we introduced and analyzed a model of opinion dynamics with decaying confidence where agents may only reach local agreements organizing themselves in communities.
Under suitable assumptions, we have shown that these communities correspond to asymptotically connected components of the network. We have also provided an algebraic characterization of communities in terms of eigenvalues of the matrix defining the collective dynamics.
To complete the analysis of our model, future work should focus on relaxing Assumption~\ref{assum4}
by studying the model behavior when there is an agent $i\in V$ that approaches its limit value at a rate exactly $\rho$:
$$
\limsup_{t\rightarrow +\infty} \frac{1}{t} \log(|x_i(t)-x_i^*|) = \log(\rho).
$$

In the last part of the paper, we have applied our opinion dynamics model to address the problem of community detection in graphs.
We believe that this new approach offers an appealing interpretation of community detection: communities are sets of agents that succeed to reach an agreement under some convergence rate constraint. We have shown on three examples that this approach is not only appealing but is also effective.
In the future, we shall work on a distributed implementation of our approach. Let us remark that
this should be feasible since our approach is by nature based on distributed computations.
Then, we shall use our approach to analyze a number of networks including large scale networks.


\bibliographystyle{alpha}
\bibliography{paper}

\end{document}